\pgfplotsset{compat=1.18}
\numberwithin{equation}{section}
\DeclareMathOperator{\conv}{\mathsf{conv}}
\DeclareMathOperator{\coni}{\mathsf{coni}}
\DeclareMathOperator{\cl}{\mathsf{cl}}                           
\newcommand{\bracket}[1]{\left[#1 \right]}				
\DeclareMathOperator{\spec}{\mathsf{spec}}
\newtheorem{theorem}{Theorem}[section]
\newtheorem{lemma}[theorem]{Lemma}
\newtheorem{proposition}[theorem]{Proposition}
\newtheorem{conjecture}[theorem]{Conjecture}
\theoremstyle{definition}
\newtheorem{definition}[theorem]{Definition}
\newtheorem{example}[theorem]{Example}
\theoremstyle{remark}
\newtheorem{remark}[theorem]{Remark}
\journal{J.~Algebra}
\begin{document}

\begin{frontmatter}
    \title{Character tables are ideal Perron similarities}
    \author[add]{David Z.~Gershnik}
        \ead{dgersh@uw.edu}
    \author[add]{Alexander J.~Lewis}
        \ead{lewis002@uw.edu}
    \author[add]{Pietro Paparella\corref{cor1}}
        \ead{pietrop@uw.edu}
        \affiliation[add]{organization={Division of Engineering \& Mathematics, University of Washington Bothell},
                    addressline={18115 Campus Way NE}, 
                    city={Bothell},
                    postcode={98011-8246}, 
                    state={WA},
                    country={U.S.A}}
        \cortext[cor1]{Corresponding author.}

\begin{abstract}
    An invertible matrix is called a \emph{Perron similarity} if it diagonalizes an irreducible, nonnegative matrix. Each Perron similarity gives a nontrivial polyhedral cone, called the \emph{spectracone}, and polytope, called the \emph{spectratope}, of realizable spectra (thought of as vectors in complex Euclidean space). A Perron similarity is called \emph{ideal} if its spectratope coincides with the conical hulls of its rows. Identifying ideal Perron similarities is of great interest in the pursuit of the longstanding \emph{nonnegative inverse eigenvalue problem}.
    
    In this work, it is shown that the character table of a finite group is an ideal Perron similarity. In addition to expanding ideal Perron similarities to include a broad class of matrices, the results unify previous works into a single, theoretical framework.   

    It is demonstrated that the spectracone can be described by finitely-many group-theoretic inequalities. When the character table is real, we derive a group-theoretic formula for the volume of the projected Perron spectratope, which is a simplex. Finally, an implication for further research is given.
\end{abstract}

\begin{keyword}
    character \sep character table \sep group \sep nonnegative matrix \sep Perron similarity \sep polyhedral cone \sep polytope \sep representation

    \MSC[2020] 15A29 \sep 20C15 \sep 20C99 \sep 15B51 \sep 52B99
\end{keyword}

\end{frontmatter}
    
\section{Introduction}

The \emph{nonnegative inverse eigenvalue problem (NIEP)} is to determine which multisets (hereinafter, \emph{lists}) of complex numbers occur as the spectra of entry-wise nonnegative matrices. The NIEP has proven to be one of the most challenging and sought-after problems in matrix analysis and includes a number of subproblems that are equally as daunting \cite{jmpp2018}. 

If $A$ is an $n$-by-$n$ entrywise nonnegative matrix with spectrum $\Lambda = \{\lambda_1,\ldots, \lambda_n \}$, then the list $\Lambda$ is called \emph{realizable}, and $A$ is called a \emph{realizing matrix (for $\Lambda$)}. In such a case,    
\begin{align}
\rho \left(\Lambda\right) &\coloneqq \max_{1 \leqslant k \leqslant n} \left\{ \vert\lambda_k\vert \right\} \in \Lambda, \label{sprad} \\		
\overline{\Lambda} &\coloneqq \left\{ \overline{\lambda_1},\dots, \overline{\lambda_n} \right\} = \Lambda,              \label{selfconj} 	\\ 
s_k (\Lambda) &\coloneqq \sum_{i=1}^n \lambda_i^k = \trace{(A^k)} \geqslant 0,~\forall k \in \mathbb{N},                \label{trnn}		\\
\intertext{and}
\left[ s_k (\Lambda) \right]^\ell &\leqslant n^{\ell-1} s_{k\ell} (\Lambda), \forall k, \ell \in \mathbb{N} \notag           \label{JLL}  
\end{align} 
are the well-known necessary conditions.

If $S \in \mathsf{GL}_n(\mathbb{F})$, where $\mathbb{F} \in \{\mathbb{R},\mathbb{C}\}$, and $e$ denotes the all-ones vector of appropriate size, 
then the polyhedral cone 
\[ \mathcal{C}(S) := \{ x \in \mathbb{F}^n \mid M_x \coloneqq SD_x S^{-1} \geqslant 0 \} \supseteq \{ \alpha e \mid \alpha \geqslant 0 \}, \] 
where $D_x$ is the diagonal matrix whose $(k,k)$-entry is $x_k$, is called the \emph{(Perron) spectracone of $S$}, and the polytope
\[ \mathcal{P}(S) := \left\{ x \in \mathcal{C}(S) \mid M_x e = e \right\} \supseteq \{e\}, \]
is called the \emph{(Perron) spectratope of $S$}. If there is a vector $x \in \mathbb{F}^n$ such that the matrix $M_x$ is irreducible and nonnegative, then $S$ is called a \emph{Perron similarity}. The matrix $S$ is called \emph{ideal} if $\mathcal{C}(S) = \mathcal{C}_r(S)$, where $\mathcal{C}_r(S)$ denotes the conical hull of the rows of $S$. 

Identifying ideal Perron similarities is and has been of interest in the study of the NIEP \cite{jp2016,jp2017,jp2025}. In particular:
\begin{itemize}
    \item A matrix $H = [h_{ij}] \in \mathsf{M}_n(\mathbb{R})$ is called a \emph{Hadamard matrix} if $h_{ij} \in \{ \pm 1 \}$ and $H H^\top = nI_n$. A Hadamard matrix $H$ is called \emph{normalized} if $He_1 = e$ and $e_1^\top H = e^\top$. Johnson and Paparella \cite{jp2016} used the theory of \emph{association schemes} to show that the \emph{Walsh matrix}
    \begin{equation}
        \label{walsh}
        H_{2^n} \coloneqq
        \begin{bmatrix}
            1 & 1   \\
            1 & -1
        \end{bmatrix}^{\otimes n},\ n \in \mathbb{N}_0    
    \end{equation}
    is ideal. Although every normalized Hadamard matrix is a Perron similarity, not every such matrix is ideal \cite[Remark 4.31]{jp2025}.

    \item A matrix $H = [h_{ij}] \in \mathsf{M}_n(\mathbb{C})$ is called a \emph{complex Hadamard matrix} if $\vert h_{ij}\vert = 1$ and $HH^\ast = nI_n$. A complex Hadamard matrix $H$ is called \emph{dephased} if $He_1 = e$ and $e_1^\top H = e^\top$.

    In a more recent work, Johnson and Paparella \cite[Corollary 7.1]{jp2025} showed that the non-normalized discrete Fourier transform matrix 
         \begin{equation}
             \label{dft}
             F = F_n = [\omega_n^{(i-1)(j-1)}],\ \omega_n \coloneqq e^{\frac{2\pi}{n}\mathsf{i}},
         \end{equation}
    a dephased complex Hadamard matrix, is also ideal by appealing to the fact that $F_n$ is a Vandermonde matrix with respect to the zeros of the polynomial $t^n - 1$. Furthermore, any arbitrary Kronecker product of the form
        \begin{equation}
        \label{arbitrary}
        \left( \bigotimes_{j=1}^N F_{n_j} \right) \otimes H_{2^k},\ k \in \mathbb{N}_0,\ n_j \in \mathbb{N},\ 1 \leqslant j \leqslant N,
        \end{equation}
    is also ideal \cite[Corollary 7.17]{jp2025}. 
\end{itemize}

If $H$ is a dephased complex Hadamard matrix, then $H$ is a Perron similarity because
$$\frac{1}{n}H (e_1e_1^\top) H^\ast = \frac{1}{n}ee^\top > 0,$$
but a dephased complex Hadamard matrix need not be ideal: notice that there are values $\theta \in [0,\pi)$ such that the multisets
\[ \{ 1, \mathsf{i}e^{\mathsf{i}\theta}, -1, -\mathsf{i}e^{\mathsf{i}\theta} \} \]
and 
\[ \{ 1, -\mathsf{i}e^{\mathsf{i}\theta}, -1,\mathsf{i}e^{\mathsf{i}\theta} \} \]
fail the self-conjugacy condition \eqref{selfconj}. Consequently, there are values $\theta \in [0,\pi)$ such that the dephased complex Hadamard matrix
\[ F_4^{(1)}(\theta) \coloneqq 
\begin{bmatrix}
    1 & 1 & 1 & 1 \\
    1 & \mathsf{i}e^{\mathsf{i}\theta} & -1 & -\mathsf{i}e^{\mathsf{i}\theta} \\
    1 & -1 & 1 & -1 \\
    1 & -\mathsf{i}e^{\mathsf{i}\theta} & -1 & \mathsf{i}e^{\mathsf{i}\theta}
\end{bmatrix},\ \theta \in [0,\pi).
\]
is not ideal. 

It is thus natural to ask if the matrices given by equations \eqref{walsh}--\eqref{arbitrary} belong to a larger class of ideal Perron similarities. Interestingly, these matrices are \emph{character tables}, which leads naturally to the question: is every character table an ideal Perron similarity?

We utilize character theory to affirmatively answer the question posed above. This not only expands ideal Perron similarities to a broad class of matrices but also unifies results that appeared previously in the literature \cite{dppt2022, jp2016,jp2017,jp2025} within a single theoretical framework.

Constructive results are highly sought-after in the NIEP. Indeed, according to Chu \cite{c1998}: 
\begin{adjustwidth}{2.5em}{0pt}
Very few of these theoretical results are ready for implementation to actually compute [the realizing] matrix. The most constructive result we have seen is the sufficient condition studied by Soules [175]. But the condition there is still limited because the construction depends on the specification of the Perron vector---in particular, the components of the Perron eigenvector need to satisfy certain inequalities in order for the construction to work. \cite[p.~18]{c1998}. 
\end{adjustwidth}
Unfortunately, what Chu wrote in the late 1990s still rings true---for example, a constructive version of a result due to Fiedler that every \emph{Sule\u{\i}manova spectrum} is symmetrically realizable is only known for low dimensions and Hadamard orders \cite{jp2016}.

The results contained here are constructive: every character table yields a polytope of realizable spectra that is closed with respect to the Hadamard product. Of particular interest are the extreme points of the polytope corresponding to the character table of a finite Abelian group as they are located on the boundary of the set of realizable spectra. 

\section{Background \& Notation}

For ease of notation, $\mathbb{N}$ denotes the set of positive integers and $\mathbb{N}_0 \coloneqq \mathbb{N} \cup \{ 0 \}$. If $n \in \mathbb{N}$, then $\bracket{n} \coloneqq \{ k \in \mathbb{N} \mid 1 \leqslant k \leqslant n \}$.

The set of $m$-by-$n$ matrices with entries from a field $\mathbb{F}$ is denoted by $\mathsf{M}_{m \times n}(\mathbb{F})$. If $m = n$, then $\mathsf{M}_{n \times n}(\mathbb{F})$ is abbreviated to $\mathsf{M}_n(\mathbb{F})$. The set of nonsingular matrices in $\mathsf{M}_n(\mathbb{F})$ is denoted by $\mathsf{GL}_{n}\left(\mathbb{F}\right)$. 

If $x \in \mathbb{F}^n$, then $x_k$ or $[x]_k$ denotes the $k\textsuperscript{th}$-entry of $x$ and $D_x \in \mathsf{M}_n$ denotes the diagonal matrix whose $(k,k)$-entry is $x_k$. A vector $x \in \mathbb{F}^n$ is called \emph{totally nonzero} if $x_k \ne 0,\ \forall k \in \bracket{n}$. If $x \in \mathbb{F}^n$ is totally nonzero, then $(D_x)^{-1} = D_{x^{-1}}$, where $x^{-1}$ denotes the entrywise inverse of $x$ (i.e., the inverse with respect to the Hadamard product).

By the mechanics of matrix multiplication, if $S \in \mathsf{GL}_n(\mathbb{F})$ and $t_{ij}$ denotes the $(i,j)$-entry of $S^{-1}$, then
\begin{equation}
   \left[ S D_x S^{-1} \right]_{ij} = \sum_{k=1}^n s_{ik} (x_{k} t_{kj}) = \sum_{k=1}^n (s_{ik} t_{kj}) x_{k}. \label{(i,j)-entry}
\end{equation}

Denote by $I$, $e$, $e_k$, and $0$ the identity matrix, the all-ones vector, the $k\textsuperscript{th}$ canonical basis vector, and the zero vector, respectively. The size of each aforementioned object is implied by its context.

If $A \in \mathsf{M}_{m \times n}(\mathbb{F})$, then:

    \begin{itemize}
        \item $a_{ij}$, $a_{i,j}$, or $[A]_{ij}$ denotes the $(i,j)$-entry of $A$;
        \item $A^\top$ denotes the \emph{transpose of $A$};
        \item $\overline{A} = [\overline{a_{ij}}]$ denotes the (entrywise) conjugate of $A$; 
        \item $A^\ast \coloneqq  \overline{A^\top} = \overline{A}^\top$ denotes the conjugate-transpose of $A$;  
        \item $r_i(A) \coloneqq A^\top e_i$ denotes the $i\textsuperscript{th}$-row of $A$ as a column vector and when the context is clear, $r_i(A)$ is abbreviated to $r_i$; and  
        \item when $m=n$, $\spec A = \spec(A)$ denotes the \emph{spectrum of $A$} and $\rho = \rho(A)$ denotes the \emph{spectral radius of $A$}.
    \end{itemize}

If $\varnothing \subset X \subseteq \mathbb{F}^n$, then the \emph{conical hull of $X$}, denoted by $\coni X = \coni(X)$, is defined by 
\[ \coni {X} = \left\{ \sum_{k=1}^m \alpha_k x_k \in \mathbb{F}^n \mid m \in \mathbb{N},~x_k \in X,~\alpha_k \geqslant 0 \right\} \]
i.e., $\coni X$ consists of all \emph{conical combinations}. Similarly, the \emph{convex hull of $X$}, denoted by $\conv{X} = \conv(X)$ is defined by  
\[ \conv {X} = \left\{ \sum_{k=1}^m \alpha_k x_k \in \mathbb{F}^n \mid m \in \mathbb{N},~x_k \in X,\ \sum_{k=1}^m \alpha_k = 1,\ \alpha_k \geqslant 0 \right\}. \]
The conical hull or convex hull of a finite list $\{x_1,\ldots,x_n\}$ is abbreviated to $\coni(x_1,\ldots,x_n)$ or $\conv(x_1,\ldots,x_n)$, respectively.

\subsection{Characters \& Representations}

Hereinafter, $G = (G,e_G, \cdot)$ is a finite group. For ease of notation, $a\cdot b$ is abbreviated to $ab$. If $g \in G$, then $\cl(g) \coloneqq \{aga^{-1}\mid a \in G\}$ denotes the \emph{conjugacy class of $g$} and $C_G (g) \coloneqq \{ a \in G \mid ag = ga \}$ denotes the \emph{centralizer of $g$ (in $G$)}. Because $\{ \cl(g) \mid g \in G \}$ forms a partition of $G$, we write $x \sim y$ whenever $x,y \in \cl(g)$.

If $\rho:G \longrightarrow \textsf{GL}_n(\mathbb{C})$ is a homomorphism, then $\rho$ is called a \emph{(matrix) representation (of $G$)}. As $\rho$ is a homomorphism, $\rho(e_G) = I_n$ and $\rho(g^{-1}) = \rho(g)^{-1}$. 

If $\rho$ and $\sigma$ are representations, then:

\begin{itemize}
    \item the \emph{conjugate transpose of $\rho$}, denoted by $\rho^\ast$, is the representation defined by $\rho^\ast(g) = \rho(g)^\ast,\ \forall g \in G$;
    \item the \emph{direct sum of $\rho$ and $\sigma$}, denoted by $\rho\oplus\sigma$, is the representation defined by $(\rho \oplus \sigma)(g) = \rho (g) \oplus \sigma (g),\ \forall g \in G$;  
    \item the \emph{Kronecker} or \emph{tensor product} of $\rho$ and $\sigma$, denoted by $\rho \otimes \sigma$, is the representation defined by $(\rho \otimes \sigma)(g) = \rho (g) \otimes \sigma (g),\ \forall g \in G$; and
    \item $\rho$ and $\sigma$ are \emph{isomorphic} or \emph{similar} if there is an invertible matrix $S$ such that $\rho(g) = S\sigma(g) S^{-1},\ \forall g \in G$.
\end{itemize}

For a representation $\rho$, the function $\chi_\rho : G \longrightarrow \mathbb{C}$, defined by 
\[ \chi_\rho(g) = \text{tr}(\rho(g)),\ \forall g \in G, \] 
is called the \emph{character of $\rho$}. The \emph{degree} or \emph{dimension of $\rho$}, denoted by $\dim \rho = \dim (\rho)$, is the quantity $n = \trace{I_n} = \chi_\rho (e_G)$. The \emph{degree} or \emph{dimension of $\chi_\rho$} is the same quantity.

The following properties are well known (see, e.g., Fulton and Harris \cite[p.~13]{fh1991} or Serre \cite[Propositions 1 and 2]{s1977}):
\begin{align}
    \chi_{\rho \oplus \sigma} (g) &= \chi_\rho (g) + \chi_\sigma (g)        \notag             \\
    \chi_{\rho \otimes \sigma} (g) &= \chi_\rho (g) \cdot \chi_\sigma (g)   \label{tensorprod} \\     
    \chi_{\rho^\ast} (g) &= \overline{\chi_\rho (g)} = \chi_\rho(g^{-1})    \label{conjtrans}  \\
    \chi_\rho (g) &= \chi_\rho (h),\ g \sim h                               \label{classfun}
\end{align}
Equation \ref{classfun} ensures that $\chi_\rho$ is a \emph{class function}.

A subspace $W$ of $\mathbb{C}^n$ is called \emph{$G$-invariant} or a \emph{$G$-invariant subspace} if 
$$\rho(g) w \in W,\ \forall g \in G,\ \forall w \in W.$$ 
Obviously, the trivial subspaces $\{ 0 \}$ and $\mathbb{C}^n$ are $G$-invariant; if $\mathbb{C}^n$ has a proper $G$-invariant subspace, then $\rho$ is called \emph{reducible} or a \emph{reducible representation}. If $\mathbb{C}^n$ has no proper $G$-invariant subspace, then $\rho$ is called \emph{irreducible} or an \emph{irreducible representation}. A character of an irreducible representation is called an \emph{irreducible character}.  

If $\rho$ is a {reducible representation}, then there is an invertible matrix $S$ such that 
\[ S^{-1} \rho(g) S = 
\begin{bmatrix}
\rho_1(g) & \sigma(g) \\
0 & \rho_2(g)
\end{bmatrix},\]
where $\rho_1 = \left.\rho\right|_W$, $\rho_2$ is the representation of $G$ on $\mathbb{C}^n/ W$, and $\sigma(g)$ is a rectangular matrix \cite[p.~848]{df2004}.

The representation $g \in G \longmapsto I_n\in \mathsf{GL_n(\mathbb{C})}$ is called the \emph{($n$-dimensional) trivial representation} and is irreducible when and only when $n=1$.  

If $\rho$ and $\sigma$ are representations, then  
\begin{equation*}\label{char_inner_prod}
    \langle \chi_\rho,\chi_\sigma \rangle 
    \coloneqq \frac{1}{|G|} \sum_{g \in G} \overline{\chi_\rho(g)} \chi_\sigma(g) 
    = \frac{1}{|G|}\sum_{k=1}^n|\cl(g_k)| \left( \overline{\chi_\rho(g_k)} \chi_\sigma(g_k) \right)
\end{equation*}
defines an inner product on characters (see, e.g., Artin \cite[pp.~300]{a1991}).

The number of distinct irreducible representations (up to isomorphism) of a finite group equals the number of its distinct conjugacy classes (see, e.g., Artin \cite[Theorem 10.4.6(b)]{a1991}; Fulton and Harris \cite[Corollary 2.13]{fh1991}; or Serre \cite[Theorem 7]{s1977}). The following well-known theorem (see, e.g., Artin \cite[Theorem 10.4.6(a) and Corollary 10.4.8]{a1991} or Dummit and Foote \cite[Theorem 15]{df2004}) will be crucial in what follows.

\begin{theorem}
    \thlabel{charlinearcombo}
    If $\rho_1, \ldots, \rho_n$ are the distinct irreducible representations of $G$ and $\chi_1, \ldots, \chi_n$ are their respective characters, then 
    \begin{equation*}
        \langle \chi_i,\chi_j \rangle = \delta_{ij}. \label{sor_rows}
    \end{equation*}
    If $\rho$ is a representation, then 
    \begin{equation}
        \label{charnnlc}
        \chi_{\rho} (g) = \sum_{k = 1}^n \langle \chi_{\rho}, \chi_k \rangle \chi_k (g),\ \forall g \in G.
    \end{equation}
\end{theorem}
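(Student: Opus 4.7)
The plan is to establish the two assertions in sequence: first the orthogonality relations via Schur's lemma, then the decomposition formula via Maschke's theorem combined with orthogonality.

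For orthogonality, given irreducible representations $\rho_i : G \to \mathsf{GL}_{n_i}(\mathbb{C})$ and $\rho_j : G \to \mathsf{GL}_{n_j}(\mathbb{C})$ and any $T \in \mathsf{M}_{n_i \times n_j}(\mathbb{C})$, I would form the averaged operator
\[ T^\sharp := \frac{1}{|G|} \sum_{g \in G} \rho_i(g)\, T\, \rho_j(g^{-1}). \]
A reindexing of the sum shows $\rho_i(h)\, T^\sharp = T^\sharp\, \rho_j(h)$ for every $h \in G$, so $T^\sharp$ intertwines $\rho_i$ and $\rho_j$. Since the kernel and image of any intertwiner are $G$-invariant subspaces, Schur's lemma forces $T^\sharp = 0$ when $\rho_i \not\cong \rho_j$ and $T^\sharp = (\trace(T)/n_i)\, I_{n_i}$ when $i = j$ (the scalar being obtained by taking traces of both sides). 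Running this argument through each matrix unit $T = E_{k\ell}$ and reading off the $(p,q)$-entry produces the orthogonality relations for matrix coefficients; summing those relations with $p=k$ and $q=\ell$, together with \eqref{conjtrans} to rewrite $\chi_i(g^{-1}) = \overline{\chi_i(g)}$, yields $\langle \chi_i, \chi_j\rangle = \delta_{ij}$.

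For the decomposition, Maschke's theorem---complete reducibility of finite-dimensional complex representations of a finite group, proved by averaging an arbitrary inner product on $\mathbb{C}^n$ to obtain a $G$-invariant one and passing to orthogonal complements of invariant subspaces---provides nonnegative integers $m_1, \ldots, m_n$ with $\rho \cong \bigoplus_{k=1}^n m_k \rho_k$. Taking characters and applying additivity gives $\chi_\rho = \sum_{k=1}^n m_k \chi_k$, and pairing both sides with $\chi_\ell$ while invoking the orthogonality just established recovers $m_\ell = \langle \chi_\rho, \chi_\ell\rangle$, which is precisely \eqref{charnnlc}.

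The main obstacle is the bookkeeping of inverses and conjugates in the averaging step. One must place $\rho_j(g^{-1})$ (rather than $\rho_j(g)$) on the right of $T$ in order to produce a genuine $G$-equivariant map, and one must verify that tracing the scalar identity $T^\sharp = \lambda\, I_{n_i}$ yields the factor $1/n_i$, so that the double sum of matrix-coefficient orthogonality relations telescopes to exactly $\delta_{ij}$ rather than $n_i\, \delta_{ij}$ or $n_i^{-1}\, \delta_{ij}$. A minor but worthwhile sanity check is that Maschke's hypotheses---characteristic zero and $|G| < \infty$---are both in force, which holds by assumption on $G$.
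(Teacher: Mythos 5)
The paper does not prove this theorem: it is quoted as a standard fact of character theory, with the authors pointing to Artin (Theorem 10.4.6(a), Corollary 10.4.8), Fulton--Harris, and Dummit--Foote for the argument. So there is no in-paper proof to compare against. Your reconstruction is correct and is exactly the textbook derivation those references give: the averaging/intertwiner argument plus Schur's lemma yields the orthogonality of matrix coefficients (with the crucial $1/n_i$ factor obtained by tracing the scalar identity), summing the diagonal matrix-coefficient relations and invoking $\overline{\chi_i(g)}=\chi_i(g^{-1})$ gives $\langle\chi_i,\chi_j\rangle=\delta_{ij}$, and Maschke's theorem together with additivity of characters and the just-proved orthogonality produces $\chi_\rho=\sum_k\langle\chi_\rho,\chi_k\rangle\chi_k$. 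One small detail worth making explicit when you pair with $\chi_\ell$: the paper's inner product is conjugate-linear in the first argument, so the multiplicities arise as $m_\ell=\langle\chi_\ell,\chi_\rho\rangle=\overline{\langle\chi_\rho,\chi_\ell\rangle}$, and you need the observation that these are nonnegative integers (hence real) to write them as $\langle\chi_\rho,\chi_\ell\rangle$ in the form that appears in \eqref{charnnlc}; you allude to this but it is the one spot where a reader could stumble.
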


Every character is, in fact, a nonnegative integral linear combination of the irreducible characters \cite[p.~868]{df2004}. 

\subsection{Character Tables}

Suppose that $\rho_1, \ldots, \rho_n$ are, up to isomorphism, the distinct irreducible representations of $G$ and $\cl (g_1), \ldots, \cl (g_n)$ are the distinct conjugacy classes of $G$. For ease of notation, $\chi_{\rho_k}$ is abbreviated to $\chi_k$. The $n$-by-$n$ matrix $Q$ 
\[ Q = 
\begin{bmatrix}
    \chi_1(g_1) & \cdots & \chi_1(g_n) \\
    \vdots      & \ddots & \vdots       \\
    \chi_n(g_1) & \cdots & \chi_n(g_n)
\end{bmatrix}, \]
i.e., $q_{ij} \coloneqq \chi_i(g_j)$, is called the \emph{character table of $G$}. The class function property \eqref{classfun} ensures that the matrix $Q$ is unique up to permutation similarity.  

Let $\rho_1: G_1 \longrightarrow \mathsf{GL}_{n_1}(\mathbb{C})$ and $\rho_2: G_2 \longrightarrow \mathsf{GL}_{n_2}(\mathbb{C})$ be representations of finite groups $G_1$ and $G_2$, respectively. The Kronecker product of matrices defines a representation of $G_1 \times G_2$, also called the tensor product, via 
\[ (\rho_1 \otimes \rho_2)(g_1,g_2) = \rho_1 (g_1) \otimes \rho_2 (g_2),\ \forall (g_1,g_2) \in G_1 \times G_2.\]
If, in addition, $\rho_1$ and $\rho_2$ are irreducible, then $\rho_1 \otimes \rho_2$ is an irreducible representation of $G_1 \times G_2$ \cite[Theorem 10(i)]{s1977}. Conversely, every irreducible representation of $G_1 \times G_2$ is isomorphic to a tensor product $\rho_1 \otimes \rho_2$, where $\rho_1$ and $\rho_2$ are irreducible representations of $G_1$ and $G_2$, respectively \cite[Theorem 10(ii)]{s1977}. Hence, if $Q_1$ and $Q_2$ are character tables of $G_1$ and $G_2$, respectively, then $Q_1 \otimes Q_2$ is the character table of $G_1 \times G_2$. This result generalizes as follows: if $Q_1,\ldots,Q_m$ are character tables of $G_1,\ldots,G_m$, respectively, then $\bigotimes_{k = 1}^m Q_k$ is the character table of $\prod_{k = 1}^m G_k$.  

\subsection{Nonnegative \& Stochastic Matrices}

If $A \in \mathsf{M}_{n}(\mathbb{C})$, $n \geqslant 2$, then $A$ is called \emph{reducible} if there is a permutation matrix $P$ such that
\begin{align*}
P^\top A P =
\begin{bmatrix}
A_{11} & A_{12} \\
0 & A_{22}
\end{bmatrix},
\end{align*}
where $A_{11}$ and $A_{22}$ are non-empty square matrices. If $A$ is not reducible, then A is called \emph{irreducible}. Clearly, entrywise positive matrices are irreducible.  

If $x \in \mathbb{R}^n$ and $x_i \geqslant 0,\ \forall i \in \bracket{n}$ ($x_i > 0,\ \forall i \in \bracket{n}$), then is called \emph{nonnegative} (respectively, \emph{positive}), and we write $x \geqslant 0$ (respectively, $x > 0$). A similar definition applies to real matrices. If $x \in \mathbb{R}^n$ ($A \in \mathsf{M}_n(\mathbb{R})$), then $x$ (respectively, $A$) is viewed as a complex vector (respectively, matrix) via the map $x \longmapsto x+ 0\mathsf{i}$ (respectively, $A \longmapsto A + 0\mathsf{i}$).

If $x \in \mathbb{C}^n$, $\Re x_i \geqslant 0,\ \forall i \in \bracket{n}$ ($\Re x_i > 0,\ \forall i \in \bracket{n}$), and $\Im x_i = 0,\ \forall i \in \bracket{n}$, then $x$ is called \emph{nonnegative} (respectively, \emph{positive}), and we write $x \geqslant 0$ (respectively, $x > 0$). A similar definition applies to complex matrices.  

If $A \geqslant 0$ and $$\sum_{j=1}^n a_{ij} = 1,~\forall~i \in \bracket{n},$$ then $A$ is called \emph{(row) stochastic}. If $A \geqslant 0$, then $A$ is stochastic if and only if $Ae = e$. Furthermore, if $A$ is stochastic, then $1 \in \spec(A)$ and $\rho(A) = 1$. It is known that the NIEP and the stochastic NIEP are equivalent (see, e.g., Johnson \cite[p.~114]{j1981}).

\section{Spectral Polyhedra \& Perron Similarities}

\begin{definition}
    \thlabel{spectratope}
    If $S \in \mathsf{GL}_{n}(\mathbb{C})$, then: 
    \begin{enumerate}
        [label=(\roman*)]
        \item $\mathcal{C}(S) \coloneqq \{ x \in \mathbb{C}^n \mid M_x = M_x(S) \coloneqq S D_x S^{-1} \geqslant 0 \}$ is called the \emph{(Perron) spectracone} of $S$;
        \item $\mathcal{P}(S) \coloneqq \{ x \in \mathcal{C}(S) \mid M_x e = e \}$ is called the \emph{(Perron) spectratope} of $S$; and
        \item $\mathcal{A}(S) \coloneqq \{ M_x \in \mathsf{M}_n(\mathbb{R}) \mid x \in \mathcal{C}(S) \}$. 
    \end{enumerate}
\end{definition}

Since $M_e = S D_e S^{-1} = S I S^{-1} = I \geqslant 0$ and $I$ is stochastic, it follows that $e \in \mathcal{P}(S) \subset \mathcal{C}(S)$ and all three sets are nonempty. If $\mathcal{C}(S) = \coni(e)$, $\mathcal{P}(S) = \{e\}$, or $\mathcal{A}(S) = \coni(I_n)$, then $\mathcal{C}(S)$, $\mathcal{P}(S)$, and $\mathcal{A}(S)$ are called \emph{trivial}; otherwise, they are called \emph{nontrivial}.

It is known that $\mathcal{C}(S)$ is a polyhedral cone and $\mathcal{P}(S)$ is a polytope and both sets are closed with respect to the Hadamard product \cite[Theorems 4.4 and 4.6]{jp2025}. The set $\mathcal{A}(S)$ is a cone that is closed with respect to matrix multiplication \cite[Theorem 4.4(iii)]{jp2025}.

Recall from the introduction that an invertible matrix $S$ is called \emph{ideal} if $\mathcal{C}(S) = \mathcal{C}_r(S)$, where $\mathcal{C}_r(S)$ denotes the conical hull of the rows of $S$. It is known that $S$ is ideal if and only if $e \in \mathcal{C}_r(S)$ and $r_i \in \mathcal{C}(S),\ \forall i \in \bracket{n}$, i.e., every row, viewed as a column vector, belongs to its spectracone \cite[Theorem 4.21]{jp2025}.

Building on a definition that previously appeared for real matrices \cite[Definition 2.3]{jp2017}, we now extend the concept of \emph{row Hadamard conic} to complex matrices.

\begin{definition}
    \thlabel{rhc}
        If $S \in \mathsf{M}_{m \times n}({\mathbb{C}})$, then $S$ is called \emph{row Hadamard conic (RHC)} if \( r_i \circ r_j \in \mathcal{C}_r(S),~\forall i,j \in \bracket{m} \).
\end{definition}

\begin{proposition}
    [{\cite[Proposition 4.16]{jp2025}}]
    \thlabel{simple}
        If $S \in \mathsf{GL}_n(\mathbb{C})$, then $x^\top S^{-1} \geqslant 0$ if and only if $x \in \mathcal{C}_r(S)$. 
\end{proposition}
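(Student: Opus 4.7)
The plan is to recognize this proposition as essentially a repackaging of the change-of-basis furnished by $S^\top$: because the $i\textsuperscript{th}$ column of $S^\top$ equals $r_i = S^\top e_i$, left-multiplication by $S^\top$ sends a column vector $\alpha = (\alpha_1,\ldots,\alpha_n)^\top$ to the linear combination $\sum_{i=1}^n \alpha_i r_i$. Consequently, $x \in \mathcal{C}_r(S)$ if and only if there exists $\alpha \geqslant 0$ with $x = S^\top \alpha$, and this $\alpha$ is unique because $S^\top$ is invertible.

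To bridge this characterization and the hypothesis $x^\top S^{-1} \geqslant 0$, I would set $\alpha^\top \coloneqq x^\top S^{-1}$, which is equivalent (upon right-multiplying by $S$ and transposing) to $x = S^\top \alpha$. The equivalence follows at once: $x^\top S^{-1} \geqslant 0$ means precisely that $\alpha^\top \geqslant 0$, i.e.\ $\alpha \geqslant 0$, which by the preceding paragraph occurs exactly when $x \in \mathcal{C}_r(S)$. For the converse direction, if $x = \sum_i \beta_i r_i$ with $\beta \geqslant 0$, then $x = S^\top \beta$ gives $x^\top S^{-1} = \beta^\top \geqslant 0$, recovering the hypothesis.

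There is no substantive obstacle—the proposition is a definitional unpacking resting on the single observation that the rows of $S$, viewed as column vectors, are the columns of $S^\top$. The only pitfall is notational bookkeeping, since the statement mixes row conventions (via $x^\top S^{-1}$) with the column-vector convention used throughout for $r_i$; I would simply transpose at the opportune moment and flag that entrywise nonnegativity is indifferent to row/column orientation.
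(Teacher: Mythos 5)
Your proof is correct. The paper does not prove this proposition itself---it cites it as \cite[Proposition~4.16]{jp2025}---but your argument is exactly the expected one: writing $\alpha^\top = x^\top S^{-1}$ (equivalently $x = S^\top\alpha$), the invertibility of $S^\top$ gives a bijection between vectors $x$ and coefficient vectors $\alpha$, and entrywise nonnegativity of $\alpha$ is simultaneously the condition $x^\top S^{-1}\geqslant 0$ and the condition that $x$ is a conical combination of the rows $r_1,\ldots,r_n$.
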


\begin{proposition}
    \thlabel{rhcrows}
        If $S \in \mathsf{GL}_n(\mathbb{C})$, then $r_i \in \mathcal{C}(S)$ if and only if $r_i \circ r_j \in \mathcal{C}_r(S),\ \forall j \in \bracket{n}$.
\end{proposition}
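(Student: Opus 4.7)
The plan is to prove both directions simultaneously by directly computing the entries of $M_{r_i} = SD_{r_i}S^{-1}$ and then appealing to Proposition \thref{simple}.

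First I would record that since $r_i = S^\top e_i$ (as a column vector), we have $[r_i]_\ell = s_{i\ell}$. Applying equation \eqref{(i,j)-entry} with $x = r_i$, the $(j,k)$-entry of $M_{r_i}$ is
\[
[M_{r_i}]_{jk} = \sum_{\ell=1}^{n} s_{j\ell}\,[r_i]_\ell\, t_{\ell k} = \sum_{\ell=1}^{n} s_{i\ell}s_{j\ell}\, t_{\ell k} = \sum_{\ell=1}^{n} [r_i \circ r_j]_\ell\, t_{\ell k},
\]
which is precisely the $k$-th entry of the row vector $(r_i \circ r_j)^\top S^{-1}$. In other words, the $j$-th row of $M_{r_i}$ equals $(r_i \circ r_j)^\top S^{-1}$.

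From this identification, $r_i \in \mathcal{C}(S)$---i.e.\ $M_{r_i} \geqslant 0$---holds if and only if every row of $M_{r_i}$ is nonnegative, i.e.\ $(r_i \circ r_j)^\top S^{-1} \geqslant 0$ for all $j \in \bracket{n}$. By Proposition \thref{simple}, this latter condition is equivalent to $r_i \circ r_j \in \mathcal{C}_r(S)$ for all $j \in \bracket{n}$, and the proof is complete.

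There is no real obstacle here; the result is essentially a bookkeeping exercise. The only points requiring mild care are keeping the row/column indexing of $S$ and $S^{-1}$ straight and using that the Hadamard product is commutative so that $r_i \circ r_j = r_j \circ r_i$ (which is what makes the $j$-th row of $M_{r_i}$ naturally involve $r_i \circ r_j$ rather than some other ordering).
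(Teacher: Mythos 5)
Your proof is correct and follows essentially the same path as the paper: both identify the $j$-th row of $M_{r_i}=SD_{r_i}S^{-1}$ with $(r_i\circ r_j)^\top S^{-1}$ (the paper does this by noting directly that the $j$-th row of $SD_{r_i}$ is $r_j\circ r_i$, whereas you verify it entrywise via equation (2.1)) and then invoke \thref{simple}. The argument is the same in substance; you have simply unpacked the row-level observation into its entries.
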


\begin{proof}
Because the $j$\textsuperscript{th}-row of $S  D_{r_i}$ is $r_j \circ r_i$, it follows that 
\begin{align*}
r_i \in \mathcal{C}(S) 
& \Longleftrightarrow S D_{r_i} S^{-1} \geqslant0 											\\ 
& \Longleftrightarrow (S D_{r_i}) S^{-1} \geqslant0 										\\
& \Longleftrightarrow \left( r_j \circ r_i \right) S^{-1} \geqslant0,~\forall j \in \bracket{n} 					\\
& \Longleftrightarrow r_j \circ r_i = r_i \circ r_j \in \mathcal{C}_r(S),~\forall j \in \bracket{n} \tag*{[\thref{simple}]}
\end{align*}
as desired.
\end{proof}

The following characterization is an immediate result of \thref{rhcrows}.

\begin{theorem}
\thlabel{RHC ideal}
    If $S \in \mathsf{GL}_n (\mathbb{C})$, then $S$ is ideal if and only if $e \in \mathcal{C}_r(S)$ and $S$ is RHC.
\end{theorem}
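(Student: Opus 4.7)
The plan is to combine the known structural criterion for ideality with \thref{rhcrows} in a single line of equivalences. Specifically, the excerpt cites the fact (from \cite[Theorem 4.21]{jp2025}) that $S$ is ideal if and only if $e \in \mathcal{C}_r(S)$ and every row, viewed as a column vector, lies in $\mathcal{C}(S)$, i.e.\ $r_i \in \mathcal{C}(S)$ for every $i \in \bracket{n}$. This is the natural starting point, since it reduces ideality to a membership condition about individual rows of $S$.

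Next I would translate that membership condition into a Hadamard-product condition using \thref{rhcrows}. That proposition says $r_i \in \mathcal{C}(S)$ if and only if $r_i \circ r_j \in \mathcal{C}_r(S)$ for every $j \in \bracket{n}$. Quantifying over $i$ as well, $r_i \in \mathcal{C}(S)$ for all $i \in \bracket{n}$ is equivalent to $r_i \circ r_j \in \mathcal{C}_r(S)$ for all $i,j \in \bracket{n}$, which is precisely the defining property of $S$ being RHC (\thref{rhc}).

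Chaining these two equivalences yields the claim: $S$ is ideal if and only if $e \in \mathcal{C}_r(S)$ together with $r_i \in \mathcal{C}(S)$ for all $i$, if and only if $e \in \mathcal{C}_r(S)$ together with RHC. There is no real obstacle; the proof is essentially a substitution of the RHC characterization of row-membership into the pre-existing row-based criterion for ideality. The only thing worth being careful about is noting explicitly that \thref{rhcrows} is stated for a fixed $i$ but that both directions remain valid when one quantifies over $i$, since the condition $r_i \circ r_j \in \mathcal{C}_r(S)$ is symmetric in $i$ and $j$ (as used in the final step of the proof of \thref{rhcrows}).
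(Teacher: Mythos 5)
Your proposal is correct and follows exactly the route the paper intends: the paper states the result is ``an immediate result of \thref{rhcrows},'' and your chain of equivalences---\cite[Theorem~4.21]{jp2025} reducing ideality to $e \in \mathcal{C}_r(S)$ and $r_i \in \mathcal{C}(S)$ for all $i$, then \thref{rhcrows} converting the row-membership condition into the RHC condition---is precisely that implicit argument made explicit. Nothing to add.
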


If $S \in \mathsf{GL}_n(\mathbb{C})$, then $S$ is called a \emph{Perron similarity} if there is a diagonal matrix $D$ such that $A = SDS^{-1}$ is irreducible and non-negative. If $S \in \mathsf{GL}_n(\mathbb{C})$, then $S$ is a Perron similarity if and only if there is a unique positive integer $k \in \bracket{n}$ such that $S e_k = \alpha x$ and $e_k^\top S^{-1} = \beta y^\top$, where $\alpha$ and $\beta$ are complex numbers such that $\alpha \beta > 0$, and $x$ and $y$ are positive vectors \cite[Theorem 4.14]{jp2025}.

\begin{lemma}
\thlabel{epairconstruct}
    Supoose that $S \in \mathsf{GL}_n (\mathbb{C})$, $\lambda \in \mathbb{C}^n$, and $A \coloneqq S D_\lambda S^{-1}$. If $\exists k \in [n]$ such that $v \coloneqq S e_k$ is totally nonzero, then $(\lambda_k, e)$ is an eigenpair of $D_{v^{-1}} A D_v$.

    \begin{proof}
        Notice that 
        \begin{align*}
            (D_{v^{-1}} A D_v) e &= D_{v^{-1}} ((S D_\lambda S^{-1} ) v)\\
            &= D_{v^{-1}} ((S D_\lambda S^{-1} ) S e_k)\\
            &= D_{v^{-1}} ((S D_\lambda) e_k)\\
            &= D_{v^{-1}} (\lambda_k v)\\
            &= \lambda_k e,
        \end{align*}
        and the result is established.
    \end{proof}
\end{lemma}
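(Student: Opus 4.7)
The plan is to verify the claim by direct computation, exploiting the column interpretation $v = Se_k$ of the $k$\textsuperscript{th} column of $S$. Since the hypothesis guarantees $v$ is totally nonzero, the matrix $D_{v^{-1}}$ is well-defined and equals $(D_v)^{-1}$, as noted in the preliminaries. In particular, $D_{v^{-1}} D_v = I$, so that $D_{v^{-1}} v = D_{v^{-1}} D_v e = e$; this identity will be the final ingredient.

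First, I would compute $A D_v e$. Because $D_v e = v$ and $v = S e_k$, I can collapse
\[
A D_v e \;=\; A v \;=\; S D_\lambda S^{-1} (S e_k) \;=\; S D_\lambda e_k \;=\; \lambda_k S e_k \;=\; \lambda_k v,
\]
where the key cancellation $S^{-1} S = I$ is what makes the computation trivial. Next, applying $D_{v^{-1}}$ and using linearity gives
\[
(D_{v^{-1}} A D_v) e \;=\; D_{v^{-1}} (\lambda_k v) \;=\; \lambda_k D_{v^{-1}} v \;=\; \lambda_k e,
\]
which is precisely the assertion that $(\lambda_k, e)$ is an eigenpair of $D_{v^{-1}} A D_v$.

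There is no substantive obstacle here; the lemma is a one-line computation once one recognizes that $D_v e = Se_k$ picks off the column of $S$ corresponding to the eigenvalue $\lambda_k$, so that $v$ is automatically an eigenvector of $A$ with eigenvalue $\lambda_k$. The role of the totally nonzero hypothesis is solely to guarantee that the diagonal similarity by $D_v$ is invertible so that the eigenvector $v$ can be rescaled to $e$.
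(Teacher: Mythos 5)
Your computation is correct and follows exactly the same chain as the paper's proof: evaluate $(D_{v^{-1}}AD_v)e$ by collapsing $D_v e = v = Se_k$, using $S^{-1}S = I$ to reduce $Av$ to $\lambda_k v$, and then rescaling by $D_{v^{-1}}$ to obtain $\lambda_k e$. The added commentary on the role of the totally nonzero hypothesis is accurate but does not change the argument.
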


If $S \in \mathsf{GL}_n(\mathbb{C})$ and $v > 0$, then $\mathcal{C}(S) = \mathcal{C}(D_vS)$ \cite[Theorem 4.10(iii)]{jp2025}. However, matrix transformations with respect to spectratopes remain unexplored. The following result is novel.

\begin{theorem}
\thlabel{P(S)contain}
    If $S \in \mathsf{GL}_n (\mathbb{C})$ is a Perron similarity, then there exists $k \in [n]$ such that $\mathcal{P}(S) \subseteq \mathcal{P}(D_{v^{-1}} S)$, where $v \coloneqq S e_k$.
\end{theorem}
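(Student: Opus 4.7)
The plan is to take $k \in \bracket{n}$ to be the unique index guaranteed by the Perron-similarity characterization cited just before the statement, so that $v \coloneqq Se_k = \alpha x$ for some $\alpha \ne 0$ and positive vector $x$, while simultaneously $e_k^\top S^{-1} = \beta y^\top$ for some positive $y$ and $\alpha\beta > 0$. Because $x$ is positive and $\alpha \ne 0$, $v$ is totally nonzero, so $D_{v^{-1}} S \in \mathsf{GL}_n(\mathbb{C})$ is well defined; this $k$ will be my candidate.

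Given $\lambda \in \mathcal{P}(S)$, I set $A \coloneqq S D_\lambda S^{-1}$, so $A \geqslant 0$ and $Ae = e$. The calculation
\[ (D_{v^{-1}} S) D_\lambda (D_{v^{-1}} S)^{-1} = D_{v^{-1}} A D_v \eqqcolon B \]
lets me verify the two defining conditions of $\mathcal{P}(D_{v^{-1}} S)$ separately. For nonnegativity, I will observe that although $v$ may be complex, it is a single complex scalar multiple of the positive vector $x$, so $v_j/v_i = x_j/x_i > 0$; hence $B_{ij} = (v_j/v_i) A_{ij} \geqslant 0$.

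For the row-sum condition $Be = e$, I will apply \thref{epairconstruct} with the $k$ above to conclude $Be = \lambda_k e$, reducing everything to the claim $\lambda_k = 1$. I expect this to be the main step: rewriting $A = SD_\lambda S^{-1}$ as $S^{-1} A = D_\lambda S^{-1}$ and applying both sides to $e$ (using $Ae = e$) yields $D_\lambda (S^{-1} e) = S^{-1} e$, so for each $i$ either $\lambda_i = 1$ or $(S^{-1} e)_i = 0$. Evaluating the $k$-th coordinate of $S^{-1} e$ gives $(S^{-1} e)_k = \beta\, y^\top e$, which is nonzero because $\beta \ne 0$ and $y > 0$; this forces $\lambda_k = 1$.

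The essential subtlety is that the two halves of the Perron-similarity characterization play distinct roles: positivity of the column $Se_k = \alpha x$ is what makes $B$ entrywise nonnegative, while positivity of the row $e_k^\top S^{-1} = \beta y^\top$ is what forces $\lambda_k = 1$ whenever $\lambda \in \mathcal{P}(S)$. Once these are in hand, the containment $\mathcal{P}(S) \subseteq \mathcal{P}(D_{v^{-1}} S)$ follows directly, and no further estimates are required.
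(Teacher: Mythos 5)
Your proof is correct, and it reaches the pivotal equality $\lambda_k = 1$ by a route genuinely different from the paper's. Both you and the paper fix the same index $k$ from the Perron-similarity characterization (so that $v = S e_k = \alpha x$ with $x > 0$, $\alpha \neq 0$), reduce the problem via \thref{epairconstruct} to showing $\lambda_k = 1$ for every $\lambda \in \mathcal{P}(S)$, and conclude. The paper uses the \emph{column} half of the characterization together with Perron--Frobenius theory: since $(\lambda_k, x)$ is an eigenpair of the nonnegative matrix $M_\lambda(S)$ with $x > 0$, one has $\lambda_k = \rho(M_\lambda(S))$, and stochasticity gives $\rho = 1$ (citing Horn and Johnson, Corollary 8.1.30). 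You instead use the \emph{row} half of the characterization, via a direct linear-algebraic argument: from $S^{-1}A = D_\lambda S^{-1}$ and $Ae = e$ you obtain $D_\lambda(S^{-1}e) = S^{-1}e$, and the coordinate $(S^{-1}e)_k = \beta\, y^\top e$ is nonzero because $\beta \neq 0$ and $y > 0$, forcing $\lambda_k = 1$. Your argument is the more elementary, dispensing with Perron--Frobenius entirely. You also verify the nonnegativity of $B = D_{v^{-1}} M_\lambda(S) D_v$ explicitly through the ratio $v_j/v_i = x_j/x_i > 0$; the paper leaves this step tacit, as it follows from the earlier-cited identity $\mathcal{C}(S) = \mathcal{C}(D_w S)$ for $w > 0$ applied to $w = x^{-1}$ after peeling off the scalar $\alpha$. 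The division of labor you articulate --- column positivity yields nonnegativity of $B$, row positivity yields $\lambda_k = 1$ --- is a clean structural observation that is not visible in the paper's version, whose explicit steps rely on the column half alone.
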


\begin{proof}
    As $S$ is a Perron similarity, there is a positive integer $k \in [n]$, a nonzero complex number $\alpha$, and a positive vector $x$ such that $v \coloneqq S e_k = \alpha x$ is totally nonzero.
    
    If $z \in \mathcal{P}(S)$, then $M_z(S)$ is a stochastic matrix. Since $(z_k,v)$ is an eigenpair, it follows that $(z_k, x)$ is an eigenpair. As $M_z(S)$ is a nonnegative matrix having a positive eigenvector, it is necessarily the case that $z_k = \rho(M_z(S))$ \cite[Corollary 8.1.30]{hj2013}. As $M_z(S)$ is stochastic, $z_k = 1$. Finally, \thref{epairconstruct} ensures that $(z_k,e) = (1,e)$ is an eigenpair of $M_z(D_{v^{-1}}S)$.     
\end{proof}

\begin{example}
    The inclusion in \thref{P(S)contain} can be strict: if $S = \begin{bmatrix}
        1 & 1 \\
        2 & -2
    \end{bmatrix}$, then the matrix 
    \[ S D_x S^{-1} = 
    \begin{bmatrix}
        \frac{x_1 + x_2}{2} & \frac{x_1 - x_2}{4} \\ x_1 - x_2 & \frac{x_1 + x_2}{2}       
    \end{bmatrix}\]
    is stochastic if and only if $x_1 = x_2 = 1$, i.e., $\mathcal{P}(S)$ is trivial. However, the matrix $H_2 = \begin{bmatrix}
        1 & 1 \\
        1 & -1
    \end{bmatrix}$
    is ideal.
\end{example}

\section{Main Results}

In what follows, $Q$ is the character table of a finite group $G$. Since character tables are unique up to permutation similarity, it is assumed that $g_1 = e_G$ and $\rho_1$ is the one-dimensional trivial representation.

\begin{theorem}
    The matrix $Q$ is a Perron similarity.
\end{theorem}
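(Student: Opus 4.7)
The plan is to exhibit an explicit diagonal matrix $D$ for which $QDQ^{-1}$ is entrywise positive (hence irreducible and nonnegative). The natural candidate is $D = D_{e_1}$, i.e., the diagonal matrix with a single $1$ in the $(1,1)$-entry and zeros elsewhere; intuitively, $QD_{e_1}Q^{-1}$ is the rank-one spectral projector associated with the trivial character.

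I would begin by collecting the two inputs that drive the computation. The standing convention forces $\rho_1$ to be the one-dimensional trivial representation and $g_1 = e_G$, so the first column of $Q$ is the positive vector
\[
v \coloneqq Qe_1 = \left(\chi_1(e_G),\ldots,\chi_n(e_G)\right)^\top = \left(\dim\rho_1,\ldots,\dim\rho_n\right)^\top > 0,
\]
while the first row of $Q$ is $e^\top$. To locate the first row of $Q^{-1}$ I would invoke the row orthogonality relations of \thref{charlinearcombo}. Setting $\Delta \coloneqq \diag\bigl(\abs{\cl(g_1)},\ldots,\abs{\cl(g_n)}\bigr)$ and translating $\langle\chi_i,\chi_j\rangle = \delta_{ij}$ into matrix form yields $Q\Delta Q^\ast = \abs{G}\,I$, whence $Q^{-1} = \frac{1}{\abs{G}}\Delta Q^\ast$. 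Because $\abs{\cl(e_G)} = 1$ and $v$ is real,
\[
e_1^\top Q^{-1} = \frac{1}{\abs{G}}\, e_1^\top \Delta Q^\ast = \frac{1}{\abs{G}}\, \overline{v}^{\,\top} = \frac{1}{\abs{G}}\, v^\top.
\]

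With these pieces in hand, the verification is a one-liner:
\[
Q D_{e_1} Q^{-1} = (Qe_1)(e_1^\top Q^{-1}) = v \cdot \frac{1}{\abs{G}}\, v^\top = \frac{1}{\abs{G}}\, v v^\top,
\]
a rank-one entrywise positive matrix, hence irreducible and nonnegative. Therefore $Q$ is a Perron similarity with realizer $D_{e_1}$.

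No substantive obstacle is anticipated: the argument is essentially bookkeeping built on the fact that the trivial representation fixes the first row of $Q$ as $e^\top$ while the dimensions of the irreducible representations populate the first column as the positive vector $v$, and on the clean formula for $Q^{-1}$ forced by character orthogonality. Equivalently, one could invoke the characterization of Perron similarities recalled just before \thref{epairconstruct} with $k = 1$, $\alpha = 1$, $\beta = 1/\abs{G}$, and $x = y = v$, for which $\alpha\beta = 1/\abs{G} > 0$.
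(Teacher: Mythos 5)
Your proof is correct and takes essentially the same route as the paper: both choose $D = D_{e_1}$, use character orthogonality to pin down $Q^{-1}$ (you via the row orthogonality of \thref{charlinearcombo}, the paper via column orthogonality—these yield the same matrix since $\vert C_G(g_i)\vert\,\vert\cl(g_i)\vert = \vert G\vert$), and conclude that $Q D_{e_1} Q^{-1}$ is entrywise positive. Your rank-one outer-product phrasing $\frac{1}{\vert G\vert}\,v v^\top$ is just a tidier packaging of the paper's entrywise computation $[M_{e_1}(Q)]_{ij} = \dim(\rho_i)\dim(\rho_j)/\vert C_G(g_1)\vert$.
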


\begin{proof}
Since 
    \begin{equation*}
        \sum_{k=1}^n \overline{\chi_k(g)} \chi_k(h) =
        \begin{cases}
            \vert C_G (g) \vert, & g \sim h \\
            0, & g \nsim h
        \end{cases}
    \end{equation*}
(see, e.g., Dummit and Foote \cite[Theorem 16]{df2004}), it follows that
    \begin{equation*}
    \langle Qe_j, Q e_i \rangle =
    (Q e_i)^\ast Q e_j = 
    \sum_{k=1}^n \overline{\chi_k(g_i)} \chi_k(g_j) = 
    \begin{cases} \vert C_G (g_i) \vert,    & i = j \\ 
    0,                                      & i \neq j
    \end{cases}
    \end{equation*}
i.e., the columns of $Q$ are orthogonal. Moreover, the matrix $D \coloneqq Q^* Q$ is an invertible diagonal matrix such that
    \begin{equation*}
    d_{ij} = 
    \begin{cases}
        \vert C_G (g_i) \vert, & i = j \\
        0, & i \neq j.
    \end{cases}
    \end{equation*}
Thus, $Q^{-1} = D^{-1} Q^\ast$ and 
    \begin{equation}
    \label{char_table_inverese_entrywise}
        \left[ Q^{-1} \right]_{ij} = \frac{\overline{q_{ji}}}{\vert C_G(g_i) \vert} 
    \end{equation}
Since $g_1 = e_G$, we have 
    \begin{equation*}
    q_{i,1} = \chi_i (e_G) = \trace{(I_{\dim(\rho_i)})} = \dim(\rho_i) > 0.  \label{firstcolumn}    
    \end{equation*}
Via Equation \ref{(i,j)-entry}, 
    \begin{align*}
        \left[ M_{e_1} (Q)\right]_{ij}
        &= \left[ Q D_{e_1} Q^{-1} \right]_{ij}                                     \\
        &= \sum_{k=1}^n \left( q_{ik} \left[Q^{-1} \right]_{kj} \right) [e_1]_k     \\
        &= \frac{q_{i,1} \overline{q_{j,1}}}{\vert C_G(e_G) \vert}                  \\
        &= \frac{\dim (\rho_i) \dim (\rho_j)} {\vert G \vert} > 0.
    \end{align*}
Thus, $M_{e_1} (Q) > 0$.
\end{proof}

\begin{theorem}
\thlabel{extremerayQ}
    The matrix $Q$ is ideal.
\end{theorem}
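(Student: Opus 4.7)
The plan is to invoke \thref{RHC ideal}, which reduces ideality of $Q$ to two verifications: (a) $e \in \mathcal{C}_r(Q)$, and (b) $Q$ is row Hadamard conic, i.e., $r_i \circ r_j \in \mathcal{C}_r(Q)$ for all $i,j \in [n]$. Both should follow directly from basic character theory assembled in Section 2, so no heavy machinery is needed.

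For (a): Since the normalization convention fixes $\rho_1$ as the one-dimensional trivial representation, its character $\chi_1$ is identically $1$, so the first row of $Q$ is exactly $e^\top$. Thus $e = r_1(Q) \in \mathcal{C}_r(Q)$ trivially (as a conical combination with coefficient $1$).

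For (b): The key observation is that the Hadamard product of two rows $r_i \circ r_j$ is the vector $(\chi_i(g_k)\chi_j(g_k))_{k=1}^n$, which by \eqref{tensorprod} is precisely the character $\chi_{\rho_i \otimes \rho_j}$ of the tensor-product representation evaluated on the conjugacy class representatives $g_1,\ldots,g_n$. By \thref{charlinearcombo} together with the remark immediately following it (every character is a nonnegative integer linear combination of the irreducible characters), we may write
\[ \chi_{\rho_i \otimes \rho_j}(g) = \sum_{k=1}^n m_{ij}^k \chi_k(g),\ \forall g \in G, \]
with $m_{ij}^k = \langle \chi_{\rho_i \otimes \rho_j}, \chi_k \rangle \in \mathbb{N}_0$. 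Evaluating this at $g = g_1,\ldots,g_n$ yields
\[ r_i \circ r_j = \sum_{k=1}^n m_{ij}^k\, r_k(Q), \]
which exhibits $r_i \circ r_j$ as a conical combination of the rows of $Q$, so $r_i \circ r_j \in \mathcal{C}_r(Q)$.

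Having established (a) and (b), \thref{RHC ideal} delivers that $Q$ is ideal. The only potential obstacle I can see is making sure the coefficients in the expansion of $\chi_{\rho_i \otimes \rho_j}$ really are nonnegative rather than merely real; this is guaranteed because tensor products of representations are genuine representations, so their characters are characters (not virtual characters), and these admit nonnegative integer decomposition into irreducibles by \thref{charlinearcombo}. Apart from that, the argument is a clean translation between the matrix language of spectracones and the multiplicative structure of the character ring.
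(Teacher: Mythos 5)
Your proof is correct and follows essentially the same route as the paper: reduce to \thref{RHC ideal}, note $\chi_1 \equiv 1$ gives $e = r_1 \in \mathcal{C}_r(Q)$, identify $r_i \circ r_j$ with the character of $\rho_i \otimes \rho_j$ via \eqref{tensorprod}, and expand that character as a nonnegative (indeed nonnegative integer) combination of irreducible characters to exhibit $r_i \circ r_j$ as a conical combination of rows. The one point you flagged---that the coefficients $\langle \chi_{\rho_i\otimes\rho_j},\chi_k\rangle$ are nonnegative because $\rho_i\otimes\rho_j$ is an honest representation, not a virtual one---is exactly the fact the paper also relies on.
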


    \begin{proof}
    Since $q_{1,j} = \chi_1 (g_j) = \trace{\left(\rho_1(g_j) \right)} = 1$, it follows that the first row of $Q$ is the all-ones vector and $e \in \mathcal{C}_r(Q)$. 

    The result is clear because the $k$th entry of $r_i^\top \circ r_j^\top$ is 
    $$\chi_i (g_k) \cdot \chi_j (g_k) = \chi_{\rho_i \otimes \rho_j} (g_k) = \sum_{\ell = 1}^n \langle \chi_{\rho_i \otimes \rho_j}, \chi_\ell \rangle \chi_\ell (g_k),$$
    which demonstrates that $Q$ is RHC and thus ideal by \thref{RHC ideal}.
    \end{proof}

Whereas \thref{extremerayQ} yields a description of $\mathcal{C}(Q)$ as the conical hull of a set of linearly independent vectors, the subsequent result describes $\mathcal{C}(Q)$ in terms of linear inequalities and demonstrates that not all of the $n^2$ inequalities of the matrix equation $M_x(Q) \geqslant 0$ are required to specify the cone. The linear inequalities are novel sufficient conditions for realizability.

\begin{theorem}
\thlabel{ineqredux}
    If $x \in \mathbb{C}^n$, then $M_x (Q) \geqslant 0$ if and only if 
    \begin{equation} \label{firstcolumn2}
        \sum_{k = 1}^n |\cl (g_k)| \chi_i (g_k) x_k \geqslant 0, \hspace{10pt} \forall i \in [n].
    \end{equation}
\end{theorem}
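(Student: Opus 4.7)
The plan is to reduce the matrix inequality $M_x(Q) \geqslant 0$ to its first column and then recover all other columns as nonnegative combinations of that first column. The forward implication is essentially free: because $g_1 = e_G$ and $\rho_1$ is the trivial representation, $\chi_1(g_k) = 1$ for every $k$, so the formula $[Q^{-1}]_{kj} = \overline{q_{jk}}/\vert C_G(g_k)\vert$ together with the identity $\vert C_G(g_k)\vert = \vert G\vert/\vert \cl(g_k)\vert$ and Equation \eqref{(i,j)-entry} give
\[
[M_x(Q)]_{i1} = \frac{1}{\vert G\vert} \sum_{k=1}^n \vert \cl(g_k)\vert \chi_i(g_k) x_k.
\]
So the inequalities in \eqref{firstcolumn2} are (up to the positive factor $1/\vert G\vert$) nothing more than $[M_x(Q)]_{i1} \geqslant 0$ for all $i$, which clearly follows from $M_x(Q) \geqslant 0$.

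For the nontrivial direction, I would compute a general entry by the same formula,
\[
[M_x(Q)]_{ij} = \frac{1}{\vert G\vert} \sum_{k=1}^n \vert \cl(g_k)\vert \chi_i(g_k) \overline{\chi_j(g_k)} x_k,
\]
and recognize the product $\chi_i(g_k)\overline{\chi_j(g_k)}$ using \eqref{conjtrans} and \eqref{tensorprod} as $\chi_{\rho_i \otimes \rho_j^\ast}(g_k)$. Since $\chi_{\rho_i \otimes \rho_j^\ast}$ is the character of a genuine representation, \thref{charlinearcombo} (combined with the remark that every character is a \emph{nonnegative} integral combination of the irreducible characters) yields
\[
\chi_i \overline{\chi_j} = \sum_{\ell=1}^n \beta^{(i,j)}_\ell \chi_\ell, \qquad \beta^{(i,j)}_\ell := \langle \chi_{\rho_i \otimes \rho_j^\ast}, \chi_\ell \rangle \in \mathbb{N}_0.
\]

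Substituting this expansion into the formula for $[M_x(Q)]_{ij}$ and interchanging the order of summation gives
\[
[M_x(Q)]_{ij} = \sum_{\ell = 1}^n \beta_\ell^{(i,j)} \left( \frac{1}{\vert G\vert} \sum_{k=1}^n \vert \cl(g_k)\vert \chi_\ell (g_k) x_k \right) = \sum_{\ell = 1}^n \beta_\ell^{(i,j)} [M_x(Q)]_{\ell 1},
\]
i.e., every entry of $M_x(Q)$ is a nonnegative combination of first-column entries. Assuming \eqref{firstcolumn2} then makes each $[M_x(Q)]_{\ell 1} \geqslant 0$, and hence $M_x(Q) \geqslant 0$.

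The main conceptual step is the use of the dual representation $\rho_j^\ast$ to absorb the complex conjugate arising from $Q^{-1}$; without this observation, one would only be able to write $\chi_i \chi_j$ as a nonnegative combination of irreducible characters, which is the wrong quantity. Once that is in hand, everything else is routine bookkeeping with the identity $\vert \cl(g_k)\vert \cdot \vert C_G(g_k)\vert = \vert G\vert$ and the nonnegativity of the multiplicities $\beta_\ell^{(i,j)}$.
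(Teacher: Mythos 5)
Your proposal is correct and follows essentially the same route as the paper: derive the entry formula $[M_x(Q)]_{ij} = \frac{1}{\vert G\vert}\sum_k \vert\cl(g_k)\vert\,\chi_i(g_k)\overline{\chi_j(g_k)}\,x_k$, read off the forward direction from the first column (where $\chi_1 \equiv 1$), and for the converse use \eqref{conjtrans} and \eqref{tensorprod} to identify $\chi_i\overline{\chi_j}$ as the character of $\rho_i \otimes \rho_j^\ast$, expand it in irreducible characters via \thref{charlinearcombo} with nonnegative multiplicities, and interchange sums. The only cosmetic difference is that you reorganize the final double sum to exhibit each entry explicitly as a nonnegative combination of first-column entries, which is a clean way to phrase the same computation.
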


\begin{proof}
    First, recall that 
    \begin{equation}
        \label{g=Cg cl g}
        \vert G \vert = \vert C_G(g)\vert \vert\cl(g)\vert,\ \forall g \in G
    \end{equation} 
    (see, e.g., Artin \cite[p.~196]{a1991}). By equations \ref{(i,j)-entry}, \ref{char_table_inverese_entrywise}, and \ref{g=Cg cl g}, 
    \begin{align*}
        \left[ M_{x} (Q)\right]_{ij}
        &= \sum_{k=1}^n \left( q_{ik} \left[Q^{-1} \right]_{kj} \right) x_k                 \\
        &= \sum_{k=1}^n \frac{\chi_i(g_k)\overline{\chi_j (g_k)}}{\vert C_G(g_k) \vert} x_k \\
        &= \frac{1}{\vert G \vert} \sum_{k=1}^n \vert \cl(g_k) \vert \chi_i(g_k)\overline{\chi_j (g_k)} x_k.
    \end{align*}  
    To demonstrate the necessity of Condition \ref{firstcolumn2}, notice that if $M_{x} (Q) \geqslant 0$ and $i \in \bracket{n}$, then   
    $$0 \leqslant [M_x (Q)]_{i 1} = \frac{1}{|G|} \sum_{k = 1}^n \vert\cl(g_k)\vert \chi_i (g_k) \overline{\chi_1 (g_k)} x_k = \frac{1}{|G|} \sum_{k = 1}^n |\cl(g_k)| \chi_i (g_k) x_k.$$

    Conversely, if Condition \ref{firstcolumn2} holds and $i, j \in [n]$, then following properties of representations and \thref{charlinearcombo}
    \begin{align*}
        [M_x (Q)]_{i j} 
        &= \frac{1}{|G|} \sum_{k = 1}^n |\cl(g_k)| \chi_i (g_k) \overline{\chi_j (g_k)} x_k                 \\
        &= \frac{1}{|G|} \sum_{k = 1}^n |\cl(g_k)| \chi_{\rho_i \otimes \rho_j^*} (g_k) x_k                                                                         \\
        &= \frac{1}{|G|} \sum_{k = 1}^n \left( \sum_{\ell = 1}^n |\cl(g_k)| \langle \chi_{\rho_i \otimes \rho_j^*}, \chi_\ell \rangle \chi_\ell (g_k) x_k \right) \\
        &= \frac{1}{|G|} \sum_{\ell = 1}^n \left( \sum_{k = 1}^n |\cl(g_k)| \langle \chi_{\rho_i \otimes \rho_j^*}, \chi_\ell \rangle \chi_\ell (g_k) x_k \right)   \\
        &= \frac{1}{|G|} \sum_{\ell = 1}^n \langle \chi_{\rho_i \otimes \rho_j^*}, \chi_\ell \rangle \left( \sum_{k = 1}^n |\cl(g_k)| \chi_\ell (g_k) x_k \right) \geqslant 0, 
    \end{align*}
    which establishes the sufficiency of Condition \ref{firstcolumn2}.
\end{proof}

\begin{remark}
    If $\Im Q \ne 0$, then $M_x$ is normal since
    \begin{align*}
        M_x^\ast M_x 
        &= (Q D_x D^{-1}Q^\ast)^\ast(Q D_x D^{-1}Q^\ast)                                                \\
        &= (Q \overline{D^{-1}} D_{\overline{x}} Q^\ast)(Q D_x D^{-1}Q^\ast)                            \\
        &= Q (\overline{D^{-1}} D_{\overline{x}} D D_x D^{-1}) Q^\ast                                   \\
        &= Q (D_x D^{-1} D \overline{D^{-1}} D_{\overline{x}}) Q^\ast                                   \\
        &= (Q \overline{D^{-1}} D_{\overline{x}} Q^\ast)(Q \overline{D^{-1}} D_{\overline{x}} Q^\ast)   \\
        &= M_x M_x^\ast
    \end{align*}
    If $\Im Q = 0$, then $Q$ can be viewed as a real matrix via the mapping $Q \longmapsto \Re Q$ and, in this case, $M_x$ is symmetric since $(QD_x D^{-1} Q^\top)^\top = Q D^{-1} D_x Q^\top = QD_x D^{-1} Q^\top$. Thus, \thref{ineqredux} yields novel sufficient conditions for the normal NIEP when $\Im Q \ne 0$ and the symmetric NIEP when $\Im Q = 0$.
\end{remark}

\section{Volume of the Projected Spectratope of a Real Character Table}

\begin{lemma}
\thlabel{scalingConeLemma}
    If $S$ is ideal and $v > 0$, then $D_v S$ is ideal.
\end{lemma}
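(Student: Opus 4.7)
The plan is to prove the two equalities $\mathcal{C}(D_vS) = \mathcal{C}(S)$ and $\mathcal{C}_r(D_vS) = \mathcal{C}_r(S)$ separately, then chain them together using the hypothesis that $S$ is ideal. Since $D_v$ is invertible (because $v > 0$ ensures no zero entries on the diagonal) and $S \in \mathsf{GL}_n(\mathbb{C})$, the product $D_vS$ lies in $\mathsf{GL}_n(\mathbb{C})$, so the notions $\mathcal{C}(D_vS)$ and $\mathcal{C}_r(D_vS)$ are well-defined.

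For the spectracone equality, I would simply invoke the already-cited result \cite[Theorem 4.10(iii)]{jp2025}, which says that for any $S \in \mathsf{GL}_n(\mathbb{C})$ and any $v > 0$, $\mathcal{C}(D_vS) = \mathcal{C}(S)$. Conceptually this is because $M_x(D_vS) = (D_vS)D_x(D_vS)^{-1} = D_v M_x(S) D_v^{-1}$, which is a positive diagonal similarity of $M_x(S)$ and hence nonnegative exactly when $M_x(S)$ is.

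For the row conical hull equality, the key observation is that the $i$-th row of $D_vS$, viewed as a column vector, equals $v_i\, r_i(S)$, since left-multiplication by the diagonal matrix $D_v$ scales the $i$-th row by $v_i$. Because each $v_i > 0$, the list $\{v_1 r_1(S), \ldots, v_n r_n(S)\}$ and the list $\{r_1(S), \ldots, r_n(S)\}$ generate the same conical hull: any conic combination using one list can be rewritten as a conic combination using the other by absorbing or releasing the positive scalars $v_i$. Hence $\mathcal{C}_r(D_vS) = \mathcal{C}_r(S)$.

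Combining these with the hypothesis $\mathcal{C}(S) = \mathcal{C}_r(S)$ gives
\[
\mathcal{C}(D_vS) = \mathcal{C}(S) = \mathcal{C}_r(S) = \mathcal{C}_r(D_vS),
\]
so $D_vS$ is ideal. There is no real obstacle here; the argument is a direct unfolding of the definitions together with the cited invariance of the spectracone under left multiplication by a positive diagonal matrix.
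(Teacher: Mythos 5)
Your proposal is correct and takes essentially the same approach as the paper: invoke the invariance $\mathcal{C}(D_vS) = \mathcal{C}(S)$ from \cite[Theorem 4.10(iii)]{jp2025}, observe that left-multiplication by $D_v$ rescales each row by a positive scalar and hence leaves the conical hull of the rows unchanged, and chain the two equalities through the hypothesis $\mathcal{C}(S) = \mathcal{C}_r(S)$. (Incidentally, the paper's one-line proof writes $\conv$ where, as your argument makes clear, the relevant invariance is for $\coni$; your version states this correctly.)
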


\begin{proof}
    Recall that $\mathcal{C}(S) = \mathcal{C}(D_v S)$ and if $\alpha_1,\ldots,\alpha_n$ are positive scalars, then $\conv{(v_1,\ldots,v_n)} = \conv{(\alpha_1 v_1,\ldots, \alpha_n v_n)}$.
\end{proof}

If $v \coloneqq Q e_1$, then, since $Q$ is ideal, it follows that $D_{v^{-1}} Q$ is ideal and $(D_{v^{-1}} Q) e_1 = e$. Recall, by \thref{P(S)contain}, that $\mathcal{P}(Q) \subseteq \mathcal{P}(D_{v^{-1}}Q)$.

If $v_0,v_1,\ldots,v_n \in \mathbb{R}^n$ are \emph{affinely independent}, i.e., the vectors $v_1 - v_0,\ldots,v_n-v_0$ are linearly independent, then $S := \conv(v_0,v_1,\ldots,v_n)$ is called an \emph{$n$-simplex}. It is known \cite{s1966} that the volume $V$ of $S$ is given by 
\begin{equation}
V  
= \dfrac{1}{n!} \left\vert \det \left(\begin{bmatrix} v_0 & v_1 & \cdots & v_n \\ 1 & 1 & \cdots & 1 \end{bmatrix}\right) \right\vert
= \dfrac{1}{n!} \left\vert \det \left(\begin{bmatrix} 1 & v_0^\top \\ 1 & v_1^\top \\ \vdots & \vdots \\ 1 & v_n^\top \end{bmatrix}\right) \right\vert
\label{vol}
\end{equation} 

For $k \in \bracket{n}$, let $P_k$ the matrix obtained by deleting the $k\textsuperscript{th}$-row of $I_n$ and let $\Pi_k: \mathbb{F}^n \longrightarrow \mathbb{F}^{n-1}$ be the projection map defined by $\Pi_k(x) = P_k x$.

In what follows, it is assumed that $\Im Q = 0$ and $Q$ is viewed as the real matrix $\Re Q$.

\begin{theorem}
    \thlabel{volumeSpectratope}
    The volume of $\Pi_1 (\mathcal{P} (D_{v^{-1}}Q))$ is given by 
    \begin{equation}
        V = \dfrac{\sqrt{\prod_{k=1}^n |C_G(g_k)| }}{(n-1)!\prod_{k=1}^n \dim(\rho_k)}. \label{volumeformula}
    \end{equation}
\end{theorem}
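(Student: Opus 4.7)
The plan is to identify the projected spectratope as a simplex whose vertices are the projected rows of $D_{v^{-1}}Q$, and then compute its volume via the standard determinantal formula \eqref{vol}, using that $Q^\ast Q$ is diagonal with entries $\vert C_G(g_k)\vert$.

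First, I would record the basic structural facts about $S \coloneqq D_{v^{-1}}Q$. Since $v_i = \chi_i(e_G) = \dim(\rho_i)$ and $v_1 = 1$, the vector $v$ is positive, so by \thref{scalingConeLemma} $S$ is ideal; thus $\mathcal{C}(S)=\coni(\tilde r_1,\ldots,\tilde r_n)$, where $\tilde r_i \coloneqq r_i(S) = \dim(\rho_i)^{-1} r_i(Q)$. A direct computation gives $S^{-1}e = Q^{-1}D_v e = Q^{-1}v = e_1$, so the defining condition $M_x(S)e = e$ reduces to $D_x e_1 = e_1$, i.e.\ $x_1 = 1$. Moreover $[\tilde r_i]_1 = \chi_i(g_1)/\dim(\rho_i) = 1$ for each $i$, so intersecting $\coni(\tilde r_1,\ldots,\tilde r_n)$ with the hyperplane $x_1 = 1$ forces $\sum \alpha_i = 1$. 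Consequently
\[
\mathcal{P}(S) \;=\; \conv(\tilde r_1,\ldots,\tilde r_n),
\]
and affine independence of $\tilde r_1,\ldots,\tilde r_n$ (hence the fact that $\mathcal{P}(S)$ is an $(n-1)$-simplex) follows from the invertibility of $Q$.

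Next, since every $\tilde r_i$ has first coordinate $1$, the projection $\Pi_1$ is injective on $\mathcal{P}(S)$, and $\Pi_1(\mathcal{P}(S)) = \conv\bigl(\Pi_1(\tilde r_1),\ldots,\Pi_1(\tilde r_n)\bigr)$ is an $(n-1)$-simplex in $\mathbb{R}^{n-1}$. Applying formula \eqref{vol} with $v_k = \Pi_1(\tilde r_k)$ and observing that the row of ones appended to these projected vertices is exactly the (deleted) first coordinate, the $n \times n$ matrix under the determinant is simply $S^\top$. Hence
\[
V \;=\; \frac{1}{(n-1)!}\,\bigl\vert \det S \bigr\vert \;=\; \frac{\vert\det Q\vert}{(n-1)!\,\prod_{k=1}^n \dim(\rho_k)}.
\]

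Finally, I would compute $\vert\det Q\vert$ using column orthogonality already established in the proof that $Q$ is a Perron similarity: $Q^\ast Q = \diag(\vert C_G(g_1)\vert,\ldots,\vert C_G(g_n)\vert)$, so
\[
\vert\det Q\vert^2 \;=\; \det(Q^\ast Q) \;=\; \prod_{k=1}^n \vert C_G(g_k)\vert.
\]
Substituting yields \eqref{volumeformula}. The only step that requires real care is verifying that the projection is injective and that the determinant formula legitimately collapses to $\det S^\top$ because the deleted coordinate is constantly $1$; everything else is bookkeeping with facts already in hand.
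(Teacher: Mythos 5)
Your proof is correct and follows essentially the same route as the paper: identify $\Pi_1(\mathcal{P}(D_{v^{-1}}Q))$ as an $(n-1)$-simplex whose volume reduces, via formula \eqref{vol}, to $\frac{1}{(n-1)!}\vert\det(D_{v^{-1}}Q)\vert$, and then evaluate $\vert\det Q\vert$ from column orthogonality. The paper states this more tersely (citing \eqref{vol} and ``properties of the determinant'' directly), whereas you explicitly justify that $\mathcal{P}(D_{v^{-1}}Q)=\conv(\tilde r_1,\ldots,\tilde r_n)$, that the first coordinate of each vertex equals $1$, and that $\Pi_1$ is injective on the simplex --- details the paper leaves implicit but which your version correctly supplies.
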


\begin{proof}
    Note that 
    $$D_{v^{-1}} = \text{diag}\left(\dfrac{1}{\dim(\rho_1)},\cdots, \dfrac{1}{\dim(\rho_n)}\right)$$
    and 
    $$\det \left(D_{v^{-1}}\right) = \dfrac{1}{\prod_{k=1}^n \dim \left(\rho_k\right)}.$$
    Since $Q^\top Q$ is a diagonal matrix with $k\textsuperscript{th}$ diagonal entry $\vert C_G(g_k)\vert$, it follows that 
    $$\det Q = \sqrt{\prod_{k=1}^n \vert C_G(g_k)\vert}.$$ 
    
    Applying \eqref{vol} and properties of the determinant, 
    \begin{align*}
        V 
        &= \dfrac{1}{(n-1)!}\left\vert \det \left(D_{v^{-1}} Q\right) \right\vert \\
        &= \dfrac{1}{(n-1)!} \vert \det \left(D_{v^{-1}} \right) \vert \vert \det(Q) \vert \\
        &= \dfrac{\sqrt{\prod_{k=1}^n |C_G(g_k)| }}{(n-1)!\prod_{k=1}^n \dim(\rho_k)}. \qedhere 
    \end{align*}
\end{proof}

\section{Examples}

If $\Lambda = \{\lambda_1 \coloneqq 1,\lambda_1,\ldots,\lambda_n\}$ is a list of real numbers and $1 \leqslant n \leqslant 3$, then $\Lambda$ is realizable if and only if conditions \ref{sprad} and \ref{trnn} are satisfied \cite{ll1978-79}, i.e., if and only if 
\begin{align}
    1 &\geqslant \vert \lambda_k \vert          \label{sprad2}  \\
    1 + \sum_{k=1}^n \lambda_k &\geqslant 0.     \label{trnn2}
\end{align}
Conditions \ref{sprad2} and \ref{trnn2} define a polytope called the \emph{trace nonnegative polytope}, whose volume is given by 
\begin{equation*}
2^n \left[1 - \frac{1}{n!}\sum_{k=0}^{\lfloor \frac{n-1}{2} \rfloor} (-1)^k{n \choose k} \left(\frac{n-1}{2}-k \right)^n \right].
\end{equation*}
(Taylor and Paparella \cite[Corollary 2.2]{tp2019}).

\begin{example}
    The character table for $\mathbb{Z}_2$ is the Walsh matrix
    \[ H_2 = 
    \begin{bmatrix}
        1 & 1 \\
        1 & -1
    \end{bmatrix} \]
    and the cone $\mathcal{C}(H_2)$ yields all possible spectra since 
    \[ H_2 D_x H_2^{-1} = \frac{1}{2} H_2 D_x H_2^\top =
    \frac{1}{2}
    \begin{bmatrix}
        x_1 + x_2 & x_1 - x_2 \\
        x_1 - x_2 & x_1 + x_2
    \end{bmatrix}.
    \]
\end{example}

\begin{example}
The character table for $\mathsf{Sym}(3)$ is 
    $$
    Q = \begin{bmatrix}
        1 & 1 & 1\\
        1 & -1 & 1\\
        2 & 0 & -1
    \end{bmatrix}
    $$
and
    $$
    M_x (Q) = \frac{1}{6} \begin{bmatrix}
        x_1 + 3x_2 + 2x_3 & x_1 - 3x_2 + 2x_3 & 2(x_1 - x_3)\\
        x_1 - 3x_2 + 2x_3 & x_1 + 3x_2 + 2x_3 & 2(x_1 - x_3)\\
        2(x_1 - x_3) & 2(x_1 - x_3) & 2(2x_1 + x_3)
    \end{bmatrix}.
    $$
By \thref{ineqredux}, the half-spaces that determine the polyhedral cone $\mathcal{C}(Q)$ are $x_1 + 3x_2 + 2x_3 \geqslant 0$, $x_1 - 3x_2 + 2x_3 \geqslant 0$, and $2(x_1 - x_3) \geqslant 0$. Indeed, the inequality $2(2x_1 + x_3) \geqslant 0$ is redundant since 
    \begin{align*}
            & [M_x (Q)]_{3 3} \\
        =   &\frac{1}{6} \sum_{\ell = 1}^3 \langle \chi_{\rho_3 \otimes \rho_3^*}, \chi_\ell \rangle \left( \sum_{k = 1}^3 |\cl(g_k)| \chi_\ell(g_k) x_k \right)\\
        =   &\frac{1}{6} \left( \frac{4 + 2}{6}(x_1 + 3x_2 + 2x_3) + \frac{4 + 2}{6} (x_1 - 3x_2 + 2x_3) + \frac{8 - 2}{6} (2(x_1 - x_3)) \right)\\
        =   &\frac{1}{6} (2(2x_1 + x_3))
    \end{align*} 
which is nonnegative whenever the other three inequalities are nonnegative. 

If $v=\begin{bmatrix} 1 & 1 & 2 \end{bmatrix}^\top$, then 
    \[ D_{v^{-1}}Q = \begin{bmatrix}
        1 & 1 & 1\\
        1 & -1 & 1\\
        1 & 0 & -0.5
    \end{bmatrix} \]
Applying  formula \ref{volumeformula}, the area of $\Pi_1 (\mathcal{P} (D_{v^{-1}}Q))$ is given by
    \begin{align*} 
        V &= \frac{\sqrt{6 \cdot 3 \cdot 2}}{(3 - 1)! \cdot 1 \cdot 1 \cdot 2} = \frac{3}{2}.
    \end{align*}
The area of the \emph{trace nonnegative polytope} is ${7}/{2}$ so that $\Pi_1 (\mathcal{P} (D_{v^{-1}}Q))$ occupies ${3}/{7}$ of the feasible region (see Figure \ref{projex}). 
    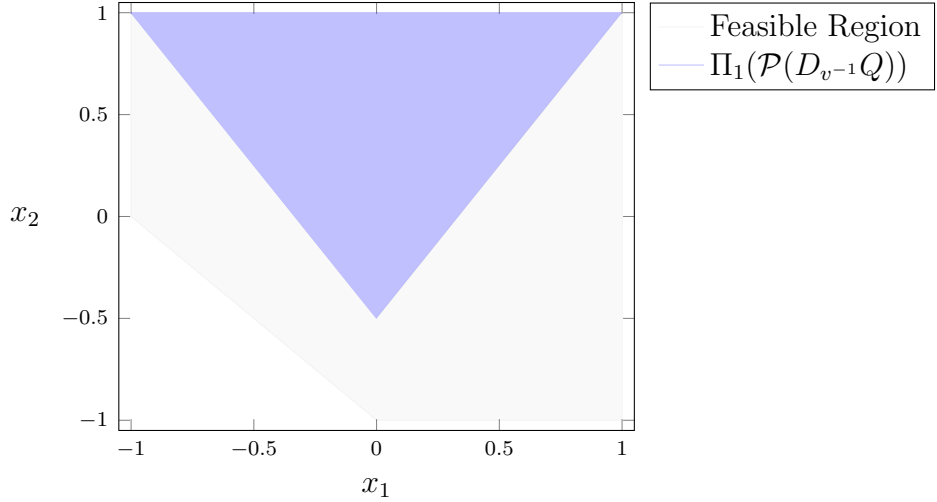
\begin{figure}[H]
    \centering
    \begin{tikzpicture}
    \begin{axis}[
    xticklabel style={font=\scriptsize},
    yticklabel style={font=\scriptsize},
    xmin=-1.05,
    xmax=1.05,
    ymin=-1.05,
    ymax=1.05,
    y label style={rotate=-90},
    xlabel=$x_1$,
    ylabel=$x_2$,
    legend entries={Feasible Region,$\Pi_1 (\mathcal{P} (D_{v^{-1}}Q))$},
    legend style={nodes=right},
    legend pos= outer north east]
    
    \addplot[fill=Gray,opacity=0.05] coordinates{(-1,1)(-1,0)(0,-1)(1,-1)(1,1)(0,-.5)}; 
    \addplot[Blue,fill=Blue,opacity=0.25] coordinates{(1,1)(-1,1)(0,-.5)}; 
    
    \end{axis}
    \end{tikzpicture}
    \caption{Feasible Region and $\Pi_1 (\mathcal{P} (D_{v^{-1}}Q))$.}
    \label{projex}
    \end{figure}
\end{example}

\begin{example}
The character table of $\mathbb{Z}_2\oplus\mathbb{Z}_2$ is the Walsh matrix
$$H_{4} = 
\begin{bmatrix}
    1 & 1 & 1 & 1 \\ 1 & -1 & 1 & -1 \\ 1 & 1 & -1 & -1 \\ 1 & -1 & -1 & 1. 
\end{bmatrix}$$
If $x \in \mathbb{R}^4$, then, following \thref{ineqredux}, $M_x(H_4) \geqslant 0$ if and only if
    \begin{align*}
    {x_1 + x_2 + x_3 + x_4} \geqslant 0   \\
    {x_1 - x_2 + x_3 - x_4} \geqslant 0  \\
    {x_1 + x_2 - x_3 - x_4} \geqslant 0  \\
    {x_1 - x_2 - x_3 + x_4} \geqslant 0
    \end{align*} 
As $\mathbb{Z}_2 \oplus \mathbb{Z}_2$ is Abelian, $\vert C_{\mathbb{Z}_2 \oplus \mathbb{Z}_2}(g)\vert = 4,\ \forall g \in \mathbb{Z}_2 \oplus \mathbb{Z}_2.$ 
Consequently, applying formula \ref{volumeformula} yields
    \begin{align*} 
        V &= \frac{\sqrt{4^4}}{(4 - 1)! \cdot 1^4} = \frac{8}{3}.
    \end{align*}
The volume of the \emph{trace nonnegative polytope} is ${20}/{3}$ so that $\Pi_1 (\mathcal{P} (H_4))$ occupies ${2}/{5}$ of the feasible region (see Figure \ref{hadspectwo}).

    \begin{figure}[H]
    \centering
    \begin{tikzpicture}
    \begin{axis}
    [view={-35}{20},
    xmin=-1.05,
    xmax=1.05,
    ymin=-1.05,
    ymax=1.05,
    zmin=-1.05,
    zmax=1.05,
    z label style={rotate=-90},
    xlabel=$x_1$,
    ylabel=$x_2$,
    zlabel=$x_3$,    
    legend entries={Feasible Region,$\Pi_1 (\mathcal{P} (H_4))$},
    legend style={nodes=right},
    legend pos= outer north east]

        \addplot3[fill=Gray,opacity=.05] coordinates{(-1,-1,1) (1,-1,1)(1,1,1)(-1,1,1)};
        \addplot3[forget plot,fill=Gray,opacity=.05] coordinates{(-1,-1,1)(-1,1,1)(-1,1,-1)};
        \addplot3[forget plot,fill=Gray,opacity=.05] coordinates{(-1,-1,1)(1,-1,1)(1,-1,-1)};
        
        \addplot3[Blue,fill=Blue,opacity=.25] coordinates{(-1,-1,1) (-1,1,-1) (1,-1,-1) (-1,-1,1)};
        \addplot3[forget plot,Blue,fill=Blue,opacity=.25] coordinates{(1,1,1) (-1,-1,1) (1,-1,-1) (1,1,1)};
        \addplot3[forget plot,Blue,fill=Blue,opacity=.25] coordinates{(1,1,1) (-1,-1,1) (-1,1,-1) (1,1,1)};
    
    \end{axis}
    \end{tikzpicture}
    \caption{Feasible Region and $\Pi_1 (\mathcal{P} (H_4))$.}
    \label{hadspectwo}
    \end{figure}
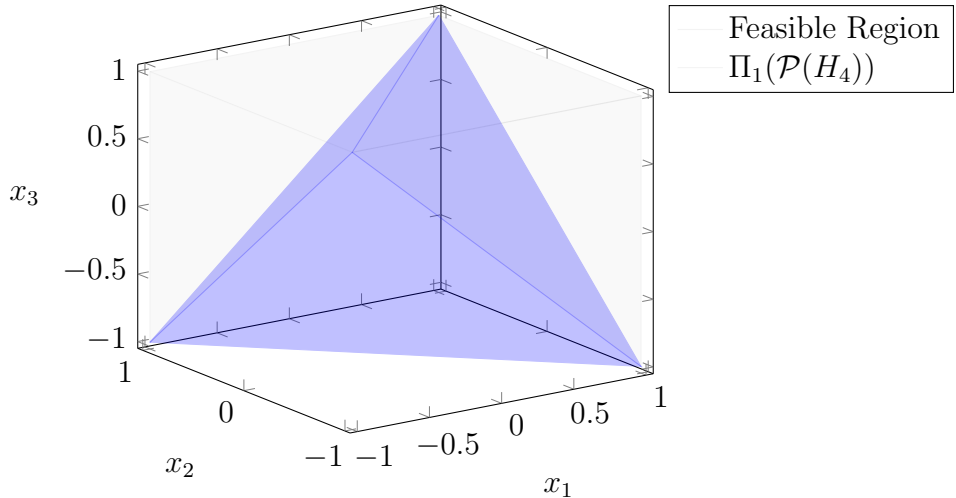
Although this polytope only occupies less than half of the realizable spectra, it is known that all possible spectra can be realized via the similarities $H_4$ and $H_2 \oplus H_2$ \cite[pp.~290--291]{jp2016}. 
\end{example}

\section{Concluding Remarks \& Further Inquiry}

In what follows, it is assumed, without loss of generality, that $S$ is an ideal Perron similarity normalized such that $S e_1 = e$ and $\vert{s_{ij}}\vert \leqslant 1$ [\S\S 4.3]\cite{jp2025}. In such a case, it is known that $\mathcal{C}(S) = \mathcal{C}_r(S)$ if and only if $\mathcal{P}(S) = \mathcal{P}_r(S)$, where $\mathcal{P}_r(S)$ denotes the convex hull of the rows of $S$ \cite[Theorem 4.22]{jp2025}. 

For $x \in \mathbb{C}^n$, denote by $\Lambda(x)$ the list $\{ x_1, \dots, x_n\}$ and for every natural number $n$, let 
\[ \mathbb{L}^n \coloneqq \{ x \in \mathbb{C}^n \mid \Lambda(x) = \spec(A),~A \in \mathsf{M}_{n}\left( \mathbb{R} \right),~A\geqslant 0 \} \]
and 
\[ \mathbb{SL}^n \coloneqq \{ x \in \mathbb{C}^n \mid \Lambda(x) = \spec(A),~A \in \mathsf{M}_{n}\left( \mathbb{R} \right),~A~\text{stochastic} \}. \]
The set $\mathbb{L}^n$ is a cone that contains $\mathbb{SL}^n$ and a characterization of either set constitutes a solution to the NIEP. It is known that $\mathbb{SL}^n$ is compact and star-shaped at $e$ \cite[Theorems 3.9 and 3.10]{jp2025}. 

If $x \in \mathbb{SL}^n$, then $1 = \rho\left(\Lambda(x)\right) \in \Lambda(x)$ and if $P$ is a permutation matrix, then \( \Lambda(Px) = \Lambda(x) \). In light of these two facts, there is no loss in generality in assuming that $x_1 = 1$. It is known that $\Pi_1(\mathbb{SL}^n)$ is star-shaped at the origin \cite[Theorem 3.12]{jp2025}.

If $x = \begin{bmatrix} 1 & x_2 & \cdots & x_n \end{bmatrix}^\top \in \mathbb{SL}^n$, then $x$ is called \emph{extremal (in $\mathbb{SL}^n$)} if $\alpha \Pi_1(x) \not \in \Pi_1(\mathbb{SL}^n)$, $\forall\alpha > 1$. The set of extremal points in $\mathbb{SL}^n$ is denoted by $\mathbb{E}_n$. Johnson and Paparella showed that $\mathbb{E}_n \subseteq \partial \mathbb{SL}^n$ and conjectured the reverse containment. Thus, identifying extremal points is of great interest in the NIEP.

A Perron similarity $S$ is called \emph{extremal} if $\mathcal{P}(S)$ contains an extremal point other than $e$.

If $G$ is a finite Abelian group, then every irreducible character is one-dimensional and the number of irreducible characters
is equal to the order of the group \cite[Theorem 10.5.2(a)]{a1991}. If $\chi_\rho$ is a one-dimensional character and $g$ is an element of $G$, then $\chi_\rho(g)$ is a power of the primitive root of unity $\omega_{\vert{g}\vert}$ \cite[p.\ 303]{a1991}. Furthermore, since there are prime numbers $p_1,\dots,p_k$ (not necessarily distinct) and positive integers $n_1,\ldots,n_k$ such that 
$$G \cong \mathbb{Z}_{p_1^{n_1}}\oplus \cdots \oplus\mathbb{Z}_{p_k^{n_k}}$$
and the discrete Fourier transform matrix $F_n$ is the character table of $\mathbb{Z}_n$, it follows that
$$F_{p_1}^{\otimes n_1} \otimes \cdots \otimes F_{p_1}^{\otimes n_1}$$
is the character table of $G$. If $G$ has even order, then the character table is of the form
$$H_{2^{n_0}} \otimes F_{p_1}^{\otimes n_1} \otimes \cdots \otimes F_{p_1}^{\otimes n_1}.$$

Karpelevi{\v{c}} \cite{k1951} (and previously, Romanovsky \cite{r1936}) proved that the so-called \emph{Karpelevi{\v{c}} region} 
$$\Theta_n \coloneqq \{ \lambda \in \mathbb{C} \mid \lambda \in \spec A,\ A\geqslant 0, Ae = e\}$$ 
intersects the unit-circle $\{ z \in \mathbb{C} \mid |z| = 1\}$ at the points $\{ \omega_q^p \mid p/q \in \mathcal{F}_n \}$, where $\mathcal{F}_n \coloneqq  \{ p/q \mid 0 \leqslant p \leqslant q \leqslant n,~\gcd(p,q)=1 \}$ denotes the set of \emph{Farey fractions}.

Thus, the character table of a finite Abelian group is \emph{totally extremal} because every entry is extremal in $\Theta_n$ (see Figure \ref{hadspectwo}; see Johnson and Paparella [\S\S 8.1]\cite{jp2025} for geometrical representations in the four-dimensional complex case).  

In view of the above, we offer the following for further inquiry.

\begin{conjecture}
    If $S$ is a normalized ideal Perron similarity and $S$ is totally extremal, then $S$ is the character table of a finite Abelian group.     
\end{conjecture}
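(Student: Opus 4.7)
The plan is to exploit total extremality, which forces every entry of $S$ to be a root of unity of order at most $n$, together with the RHC property from \thref{RHC ideal}, to show that the set of rows closes under the Hadamard product into a finite Abelian group of order $n$, and then to identify the columns as the characters of that group. Because $S$ is normalized, $Se_1 = e$ gives $s_{i1} = 1$ for every $i$. Total extremality places each $s_{ij}$ at a unit-circle Farey point of $\Theta_n$, so $|s_{ij}| = 1$ and every entry is a root of unity.

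Fix indices $i, i' \in \bracket{n}$. By \thref{RHC ideal}, one can write $r_i \circ r_{i'} = \sum_{k=1}^n \alpha_k\, r_k$ with $\alpha_k \geqslant 0$. Evaluating at the first coordinate gives $\sum_k \alpha_k = s_{i1} s_{i'1} = 1$, so the representation is convex; because the rows are linearly (hence affinely) independent, it is also unique. For any fixed coordinate $\ell$, the left side has modulus one, while $\sum_k \alpha_k s_{k\ell}$ is a convex combination of numbers of modulus one, so by the equality case of the triangle inequality the values $s_{k\ell}$ for $k$ in the support $\{k : \alpha_k > 0\}$ all coincide. Letting $\ell$ vary, every pair of rows in this support agrees coordinatewise and hence is equal, so linear independence forces the support to be a singleton. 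Thus $r_i \circ r_{i'} \in \{r_1, \ldots, r_n\}$ for all $i, i'$, and $A \coloneqq \{r_1, \ldots, r_n\}$ is a finite commutative semigroup under $\circ$; cancellation (all entries are nonzero) upgrades it to an Abelian group of order $n$ whose identity is the all-ones row.

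For each column index $j$, the map $c_j : A \to \mathbb{C}^\times$ defined by $c_j(r_i) = s_{ij}$ is a homomorphism, because whenever $r_i \circ r_{i'} = r_k$ in $A$ one has $c_j(r_i)\, c_j(r_{i'}) = s_{ij} s_{i'j} = (r_i \circ r_{i'})_j = s_{kj} = c_j(r_k)$. Thus every column of $S$ is an irreducible character of $A$; linear independence of the columns makes these characters pairwise distinct, and a finite Abelian group of order $n$ admits exactly $n$ such characters, so the columns exhaust the dual group of $A$. A suitable permutation of rows and columns then presents $S$ as the character table of $A$. The main obstacle is the rigidity step of the second paragraph: one must simultaneously exploit the unit-modulus entries (from total extremality) and the uniqueness of the convex RHC representation (from invertibility of $S$) to collapse the combination onto a single row. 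Everything downstream is the standard Pontryagin duality for finite Abelian groups.
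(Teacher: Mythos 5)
This item is stated in the paper as a \emph{conjecture}, with no proof offered, so there is no argument in the paper to compare against; you are attempting to prove an open statement. Your overall architecture is sensible: use extremality to force unit-modulus entries, use the RHC property from \thref{RHC ideal} together with the first-column normalization to write each $r_i \circ r_j$ as a \emph{convex} combination of rows, invoke the strict convexity of the disk (equality in the triangle inequality) to collapse that combination to a single row, observe that the rows then form a finite cancellative commutative semigroup and hence an Abelian group under $\circ$ with identity $e$, and finally read off the columns as the $n$ distinct characters of that group. Everything from the rigidity step onward is correct and clean, including the uniqueness of the convex representation (rows of an invertible matrix are linearly, hence affinely, independent) and the Pontryagin-duality bookkeeping at the end.

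The gap is in the very first inference: ``Total extremality places each $s_{ij}$ at a unit-circle Farey point of $\Theta_n$, so $|s_{ij}|=1$.'' The paper never defines ``totally extremal'' formally; it says only that the character table of a finite Abelian group is totally extremal ``because every entry is extremal in $\Theta_n$,'' and that $\Theta_n$ meets the unit circle exactly at the Farey roots of unity. That is a one-directional observation: unit-circle points of $\Theta_n$ are Farey points and they cannot be scaled outward. It does \emph{not} say that every extremal point of $\Theta_n$ lies on the unit circle. In fact, $\Theta_n$ is star-shaped at $0$ and its boundary consists of Karpelevi{\v{c}} arcs that touch $\{|z|=1\}$ only at the isolated Farey points; every other boundary point has modulus strictly less than $1$ yet is extremal in the ``cannot be dilated and stay inside'' sense analogous to the paper's definition of extremality in $\mathbb{SL}^n$. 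So the hypothesis that each entry of $S$ is extremal in $\Theta_n$ does not, by itself, force $|s_{ij}|=1$. Without unit modulus, the triangle-inequality rigidity in your second paragraph fails (a convex combination of points in the open disk can certainly land on a row whose entries have modulus less than one), and with it the semigroup closure and everything downstream. To repair this you would need a genuinely new argument showing that an ideal, normalized Perron similarity whose entries are all extremal in $\Theta_n$ must in fact have all entries on the unit circle --- and that is essentially where the difficulty of the conjecture lies, which is presumably why the authors left it open.
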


\bibliographystyle{abbrv}
\bibliography{references}

\end{document}